\def\ol#1{\overline{#1}}
\def\wh#1{\breve{#1}}
\def\wt#1{\widetilde{#1}}
\theoremstyle{plain}
    \newtheorem{theorem}{Theorem}[section]
    \newtheorem{proposition}[theorem]{Proposition}
    \newtheorem{lemma}[theorem]{Lemma}
    \newtheorem{corollary}[theorem]{Corollary}
\theoremstyle{definition}
    \newtheorem{definition}[theorem]{Definition}
    \newtheorem{remark}[theorem]{Remark}
\def\Alphabet{A,B,C,D,E,F,G,H,I,J,K,L,M,N,O,P,Q,R,S,T,U,V,W,X,Y,Z}
\def\alphabet{a,b,c,d,e,f,g,h,i,j,k,l,m,n,o,p,q,r,s,t,u,v,w,x,y,z}
\def\endpiece{xxx}
\def\makeAlphabet[#1]{\expandafter\makeA#1,xxx,}
\def\makealphabet[#1]{\expandafter\makea#1,xxx,}
\def\makeA#1,{\def\temp{#1}\ifx\temp\endpiece\else%
\mkbb{#1}\mkfrak{#1}\mkbf{#1}\mkcal{#1}\mkscr{#1}\mkbs{#1}\expandafter\makeA\fi}%
\def\makea#1,{\def\temp{#1}\ifx\temp\endpiece\else\mkfrak{#1}\mkbf{#1}\mkbs{#1}\expandafter\makea\fi}%
\def\mkbb#1{\expandafter\def\csname bb#1\endcsname{\mathbb{#1}}}
\def\mkfrak#1{\expandafter\def\csname fr#1\endcsname{\mathfrak{#1}}}
\def\mkbf#1{\expandafter\def\csname b#1\endcsname{\mathbf{#1}}}
\def\mkcal#1{\expandafter\def\csname c#1\endcsname{\mathcal{#1}}}
\def\mkscr#1{\expandafter\def\csname s#1\endcsname{\mathscr{#1}}}
\def\mkbs#1{\expandafter\def\csname bs#1\endcsname{{\boldsymbol{#1}}}}
\def\makeop[#1]{\xmakeop#1,xxx,}
\def\mkop#1{\expandafter\def\csname #1\endcsname{{\mathrm{#1}}}} %
\def\xmakeop#1,{\def\temp{#1}\ifx\temp\endpiece\else\mkop{#1}\expandafter\xmakeop\fi}%
\def\makeup[#1]{\xmakeup#1,xxx,}
\def\mkup#1{\expandafter\def\csname #1\endcsname{{\mathrm{#1}\,}}} %
\def\xmakeup#1,{\def\temp{#1}\ifx\temp\endpiece\else\mkup{#1}\expandafter\xmakeup\fi}%
\def\bsalpha{{\boldsymbol{\alpha}}}
\begin{document}
\title[Canonical Generating Classes]{Canonical Equivariant
Cohomology Classes Generating Zeta Values of Totally Real Fields}
\author[Bannai]{Kenichi Bannai$^{*\diamond}$}
\email{bannai@math.keio.ac.jp}
\address{${}^*$Department of Mathematics, Faculty of Science and Technology, Keio University, 3-14-1 Hiyoshi, Kouhoku-ku, Yokohama 223-8522, Japan}
\address{${}^\diamond$Mathematical Science Team, RIKEN Center for Advanced Intelligence Project (AIP), 1-4-1 Nihonbashi, Chuo-ku, Tokyo 103-0027, Japan}
\author[Hagihara]{Kei Hagihara$^{\diamond*}$}
\author[Yamada]{Kazuki Yamada$^*$}
\author[Yamamoto]{Shuji Yamamoto$^{*\diamond}$}

\date{\today}

\date{\today \quad (Version 1.08)}
\begin{abstract}
	It is known that the special values at nonpositive integers of a Dirichlet $L$-function
	may be expressed using the generalized Bernoulli numbers, which are defined by a 
	canonical generating function. 
	The purpose of this article is to consider the generalization of
	this classical result to the case of Hecke $L$-functions of totally real fields.
	Hecke $L$-functions may be expressed canonically as a finite
	sum of zeta functions of Lerch type.
	By combining the non-canonical multivariable generating functions 
	constructed by Shintani, 
	we newly construct a canonical class, which we call the \textit{Shintani generating class},
	in the equivariant cohomology of an algebraic torus associated to the totally real field.
	Our main result states that the specializations at torsion points of the derivatives of the 
	Shintani generating class give values at nonpositive integers of the zeta functions of Lerch type.
	This result gives the insight that the correct framework in the higher dimensional case is to consider 
	higher equivariant cohomology classes instead of functions.  
\end{abstract}
\thanks{This research is supported by KAKENHI 18H05233.
The topic of research initiated from the KiPAS program FY2014--2018 of the Faculty of Science and Technology at Keio University.}
\subjclass[2010]{11M35 (Primary), 11R42, 14L15, 55N91 (Secondary)}  
\maketitle
\setcounter{tocdepth}{1}

%
%
%
\section{Introduction}\label{section: introduction}
%
%
%

It is classically known that the special values at nonpositive integers of a Dirichlet $L$-function
may be expressed using the generalized Bernoulli numbers, which are defined by a canonical rational generating function. 
This simple but significant result is the basis of the deep connection between the special values of Dirichlet $L$-functions
and important arithmetic invariants pertaining to the abelian extensions of $\bbQ$.
In his ground-breaking article \cite{Shi76}, Shintani generalized this result to the case of Hecke $L$-functions of totally real fields.
His approach consists of two steps:  The decomposition of a Hecke $L$-function into a finite sum
of zeta functions -- the \textit{Shintani zeta functions} -- 
associated to certain cones, and the construction of a multivariable generating function 
for special values of each Shintani zeta function.
Although this method attained certain success, including the construction by Barsky \cite{Bar78} 
and Cassou-Nogu\`es  \cite{CN79} of the 
$p$-adic $L$-functions for totally real fields, the decomposition step above requires
a choice of cones, and the resulting generating function
is non-canonical.  A canonical object behind these generating functions remained to be found.

The purpose of this article is to construct geometrically such a canonical object, which we call the \textit{Shintani generating class},
 through the combination of the following three ideas.
We let $g$ be the degree of the totally real field.
First, the Hecke $L$-functions are expressed canonically
in terms of the \textit{zeta functions of Lerch type} (cf.\ Definition \ref{def: Lerch}),
or simply \textit{Lerch zeta functions},
which are defined for finite additive characters parameterized by torsion points of a 
certain algebraic torus of dimension $g$, originally considered by Katz \cite{Katz81},
associated to the totally real field.
Second, via a \v Cech resolution, the multivariable 
generating functions constructed by Shintani
for various cones  may beautifully be combined 
to form the Shintani generating class, a canonical
cohomology class in the $(g-1)$-st cohomology group of the algebraic torus minus the identity.
Third, the class descends into the equivariant cohomology with respect to the action of totally positive units,
which successfully allows for nontrivial specializations of the class and its derivatives at torsion points.
Our main result, Theorem \ref{theorem: main}, states that the specializations at nontrivial torsion points of the derivatives of the 
Shintani generating class give values at nonpositive integers of the Lerch zeta functions associated to the totally real field.

The classical result for $\bbQ$ that we generalize, viewed through our emphasis on Lerch zeta functions, is as follows.
The Dirichlet $L$-function may canonically be expressed as a finite linear combination of the classical
\textit{Lerch zeta functions}, defined by the series
\begin{equation}\label{eq: Lerch}
	\cL(\xi, s)\coloneqq\sum_{n=1}^\infty \xi(n)n^{-s}
\end{equation}
for finite characters $\xi\in\Hom_\bbZ(\bbZ,\bbC^\times)$.
The series \eqref{eq: Lerch} converges for any $s\in\bbC$ such that $\Re(s)>1$ and has an analytic continuation 
to the whole complex plane, holomorphic if $\xi\neq 1$. 
When $\xi=1$,  the function $\cL(1, s)$ coincides with the Riemann zeta function
$\zeta(s)$, hence has a simple pole at $s=1$.
A crucial property of the Lerch zeta functions is that it has a canonical generating function $\cG(t)$,
which single-handedly captures for \textit{all} nontrivial finite characters $\xi$ the values of Lerch zeta functions at nonpositive integers.

Let $\bbG_m\coloneqq\Spec\bbZ[t,t^{-1}]$ be the multiplicative group, and let $\cG(t)$ be the rational function
\[
	  \cG(t)\coloneqq \frac{t}{1-t}  \in \Gamma\bigl(U,\sO_{\bbG_m}\bigr),
\]
where $U\coloneqq\bbG_m\setminus\{1\}$.
We denote by $\partial$ the algebraic differential operator
$\partial\coloneqq t\frac{d}{dt}$, referred to as the ``magic stick'' in \cite{Kato93}*{1.1.7}.
Note that any $\xi\in\bbG_m(\bbC)$ corresponds to an additive character $\xi\colon\bbZ\rightarrow\bbC^\times$
given by $\xi(n)\coloneqq\xi^n$ for any $n\in\bbZ$.
Then we have the following.

\begin{theorem}\label{theorem: classical generating}
	For any nontrivial torsion point $\xi$ of $\bbG_m$ and $k\in\bbN$, we have
	\[
		\cL(\xi,-k)=\partial^k\cG(t)\big|_{t=\xi}\in \bbQ(\xi).
	\]
	In particular, the values $\cL(\xi,-k)$ for any $k\in\bbN$ are all algebraic.
\end{theorem}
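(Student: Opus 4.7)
The plan is to identify both sides with $(-1)^k k!$ times the $k$-th Taylor coefficient $a_k$ at $u = 0$ of the auxiliary function $F(u) \coloneqq \xi e^{-u}/(1 - \xi e^{-u})$, which is holomorphic near $u=0$ precisely because $\xi \neq 1$.

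For the left-hand side, I would apply the standard Mellin-transform argument. Combining the identity $\int_0^\infty e^{-nu} u^{s-1}\,du = n^{-s}\Gamma(s)$ with the geometric expansion $F(u) = \sum_{n \geq 1}\xi^n e^{-nu}$ (which converges for $u > 0$ since $|\xi|=1$) yields
\[
\Gamma(s)\,\cL(\xi,s) \;=\; \int_0^\infty F(u)\, u^{s-1}\, du \qquad (\Re(s) > 1).
\]
Splitting the integral at $u=1$, substituting the Taylor expansion $F(u) = \sum_{j \geq 0} a_j u^j$ on $[0,1]$, and integrating termwise produces a meromorphic continuation of the right-hand side to the whole $s$-plane, with simple poles at $s = 0, -1, -2, \dots$ and residue $a_k$ at $s = -k$. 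Since $1/\Gamma(s)$ has a simple zero at $s = -k$ with leading coefficient $(-1)^k k!$, taking the limit $s \to -k$ gives $\cL(\xi, -k) = (-1)^k k!\, a_k$.

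For the right-hand side, I would apply the change of variable $t = \xi e^{-u}$, under which $\cG(t) = F(u)$ and $dt = -t\,du$, so that $\partial = t\,d/dt$ corresponds to $-d/du$. It follows that
\[
\partial^k \cG(t)\big|_{t = \xi} \;=\; (-1)^k \frac{d^k F}{du^k}\bigg|_{u=0} \;=\; (-1)^k k!\, a_k,
\]
matching the value of $\cL(\xi, -k)$ obtained above. The algebraicity assertion is then immediate: $\partial^k \cG(t)$ is a rational function in $\bbQ(t)$ of the form $P_k(t)/(1-t)^{k+1}$ with $P_k \in \bbZ[t]$, and its denominator does not vanish at $\xi$, so the specialization lies in $\bbQ(\xi)$.

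There is no substantial obstacle here; the statement is classical. The only points requiring care are the sign bookkeeping in the Mellin continuation and the observation that the substitution $t = \xi e^{-u}$ is the precise bridge between the $s$-variable analytic continuation on the left and the $t$-variable rational continuation on the right — the elementary shadow of the cohomological mechanism that this paper will promote to the Shintani generating class for totally real fields.
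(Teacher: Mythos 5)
Your proof is essentially correct and is the standard self-contained Mellin-transform argument for this classical fact. Note, however, that the paper does not actually supply a proof of this theorem: it states it as a known classical result and merely remarks, after Theorem \ref{theorem: introduction}, that it coincides with the $g=1$ specialization of the main theorem, whose proof ultimately cites Shintani's generating-function identity (Theorem \ref{theorem: Shintani}) from the literature. So you are taking a genuinely different and more elementary route than the paper: your argument proves the statement from scratch via $\Gamma(s)\,\cL(\xi,s)=\int_0^\infty F(u)\,u^{s-1}\,du$ and the substitution $t=\xi e^{-u}$, while the paper subsumes it under the cohomological machinery built for general totally real fields. Your approach has the virtue of making the analytic continuation visible and of exhibiting the change of variable $t=\xi e^{-u}$ as the precise bridge between the $s$-plane and the $t$-coordinate, which is a nice foreshadowing of the paper's perspective.

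One small technical point to tighten: when you split the Mellin integral at $u=1$ and substitute the full Taylor series $F(u)=\sum_{j\geq 0}a_j u^j$ on $[0,1]$, you are implicitly assuming the radius of convergence is at least $1$. This can fail: the nearest singularity of $F$ to the origin is at $u=\pm i\theta$ where $\xi=e^{i\theta}$ with $\theta\in(0,2\pi)$, so the radius of convergence is $\min(\theta,2\pi-\theta)$, which is small when $\xi$ is a root of unity close to $1$. The standard fix is either to split at a point $\varepsilon$ smaller than the radius of convergence, or (cleaner) to write
\[
\Gamma(s)\,\cL(\xi,s)
= \int_0^1 \Bigl(F(u)-\sum_{j=0}^{K}a_j u^j\Bigr)u^{s-1}\,du
  + \sum_{j=0}^{K}\frac{a_j}{s+j}
  + \int_1^\infty F(u)\,u^{s-1}\,du,
\]
valid for $\Re(s)>-K-1$ since the first integrand is $O(u^{K+s})$; this exhibits the simple pole at $s=-k$ with residue $a_k$ without ever invoking convergence of the Taylor series up to $u=1$. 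With that adjustment, the rest of your computation — the $(-1)^k k!$ from $\Gamma$, the identification $\partial=-d/du$ under $t=\xi e^{-u}$, and the rationality of $\partial^k\cG(t)$ with pole only at $t=1$ — is exactly right.
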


The purpose of this article is to generalize the above result to the case of totally real fields.
Let $F$ be a totally real field of degree $g$, and 
let $\cO_F$ be its ring of integers.
We denote by $\cO_{F+}$ the set of totally positive integers and 
by $\Delta\coloneqq\cO_{F+}^\times$
the set of totally positive units of $F$.  
Let $\bbT\coloneqq\Hom_\bbZ(\cO_F,\bbG_m)$
be an algebraic torus defined over $\bbZ$ which represents the functor 
associating to any $\bbZ$-algebra $R$ the group
$\bbT(R)=\Hom_\bbZ(\cO_F,R^\times)$.
Such a torus was used by Katz \cite{Katz81} to reinterpret the construction by Barsky \cite{Bar78} 
and Cassou-Nogu\`es  \cite{CN79} of the $p$-adic $L$-function of totally real fields.
For the case $F=\bbQ$, we have $\bbT = \Hom_\bbZ(\bbZ,\bbG_m)=\bbG_m$, hence $\bbT$ is a
natural generalization of the multiplicative group.  
For an additive character $\xi\colon\cO_F\rightarrow R^\times$
and $\varepsilon\in\Delta$, we let $\xi^\varepsilon$ be the character defined by 
$\xi^\varepsilon(\alpha)\coloneqq\xi(\varepsilon\alpha)$ for any $\alpha\in\cO_F$.
This gives an action of $\Delta$ on the set of additive characters $\bbT(R)$.

We consider the following zeta function, which we regard as
the generalization of the classical Lerch zeta function to the case of totally real fields.

\begin{definition}\label{def: Lerch}
	For any torsion point $\xi\in\bbT(\bbC)=\Hom_\bbZ(\cO_F,\bbC^\times)$, we define the 
	\textit{zeta function of Lerch type}, or simply the
	\textit{Lerch zeta function}, by
	\begin{equation}\label{eq: Shintani-Lerch}
		\cL(\xi\Delta, s)\coloneqq\sum_{\alpha\in\Delta_{\xi}\backslash\cO_{F+}}\xi(\alpha)N(\alpha)^{-s},
	\end{equation}
	where $N(\alpha)$ is the norm of $\alpha$, and 
	$\Delta_{\xi}\subset\Delta$ is the isotropic subgroup of $\xi$, i.e.\ 
	the subgroup consisting of
	$\varepsilon\in\Delta$ such that $\xi^\varepsilon=\xi$.
\end{definition}

The notation $\cL(\xi\Delta, s)$ is used since \eqref{eq: Shintani-Lerch} depends only on the $\Delta$-orbit of $\xi$.
This series is known to converge for $\Re(s)>1$, and may be continued analytically
to the whole complex plane.
When the narrow class number of $F$ is \textit{one}, 
the Hecke $L$-function of a finite Hecke character  of $F$
may canonically be expressed as a finite linear sum of $\cL(\xi\Delta, s)$ for suitable finite  characters $\xi$
(see Proposition \ref{prop: Hecke}).  

The action of $\Delta$ on additive characters 
gives a right action of $\Delta$ on $\bbT$.
The structure sheaf $\sO_\bbT$ on $\bbT$ has a natural $\Delta$-equivariant structure in the sense of Definition 
\ref{def: equivariant structure}.
Let $U\coloneqq\bbT\setminus\{1\}$. 
Our main results are as follows.

\begin{theorem}\label{theorem: introduction}
	\begin{enumerate}
		\item \mbox{(Proposition \ref{prop: Shintani generating class}) }
		There exists a canonical class
		\[
			\cG\in H^{g-1}(U/\Delta,\sO_\bbT),
		\]
		where $H^{g-1}(U/\Delta,\sO_\bbT)$ is the equivariant 
		cohomology of $U$ with coefficients in $\sO_\bbT$ 
		(see \S\ref{section: equivariant} for the precise definition.)
		\item (Theorem \ref{theorem: main}) 
		For any nontrivial torsion point $\xi$ of $\bbT$, we have a canonical
		isomorphism
		\[
			H^{g-1}(\xi/\Delta_\xi,\sO_\xi)\cong\bbQ(\xi).
		\]
		Through this isomorphism, for any integer $k\geq 0$,
		we have
		 \[
		 	\cL(\xi\Delta,-k)=\partial^k\cG(\xi)\in\bbQ(\xi),
		\]		
		where
		$\partial\colon H^{g-1}(U/\Delta,\sO_\bbT)\rightarrow H^{g-1}(U/\Delta,\sO_\bbT)$
		is a certain differential operator given in \eqref{eq: differential}, and 
		$\partial^k\cG(\xi)$ is the image of $\partial^k\cG$ with respect
		to the specialization map
		$
			H^{g-1}(U/\Delta,\sO_\bbT)\rightarrow H^{g-1}(\xi/\Delta_\xi,\sO_\xi)
		$
		induced by the  equivariant morphism $\xi\rightarrow U$.	
		\end{enumerate}
\end{theorem}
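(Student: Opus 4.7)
The plan is to construct the class $\cG$ via a Čech cohomology argument built from Shintani's generating functions, and then to establish the specialization formula by reducing to Shintani's classical evaluation cone-by-cone.

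For Part (1), I would fix an auxiliary Shintani decomposition of the totally positive cone of $F \otimes_\bbQ \bbR$ into rational simplicial cones of maximal dimension $g$. To each such cone Shintani's construction associates a multivariable generating function, which is a rational function on $\bbT$ regular away from a union of subtori attached to the cone's generators; these are the non-canonical objects alluded to in the introduction. Arranging them as top-degree Čech cochains on a natural open cover of $U = \bbT \setminus \{1\}$ (indexed by the generators of the cones), the key combinatorial step is to verify that the contributions from different maximal cones differ only by Čech coboundaries, so that the resulting cohomology class in $H^{g-1}(U, \sO_\bbT)$ is independent of the decomposition. A further verification --- using that $\Delta$ permutes Shintani decompositions --- shows that this class is naturally $\Delta$-equivariant, so it descends to the canonical class $\cG$ in the equivariant cohomology $H^{g-1}(U/\Delta, \sO_\bbT)$.

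For Part (2), the first step is the canonical isomorphism $H^{g-1}(\xi/\Delta_\xi, \sO_\xi) \cong \bbQ(\xi)$. Since $\xi$ is a point with residue field $\bbQ(\xi)$ and $\Delta_\xi$ acts trivially on $\sO_\xi$ by the very definition of the isotropic subgroup (each $\varepsilon \in \Delta_\xi$ acts on the character value $\xi(\alpha)$ by $\xi(\varepsilon\alpha) = \xi(\alpha)$), the equivariant cohomology reduces to the ordinary group cohomology $H^{g-1}(\Delta_\xi, \bbQ(\xi))$. Dirichlet's unit theorem gives $\Delta_\xi \cong \bbZ^{g-1}$, so the top group cohomology is canonically identified with $\bbQ(\xi)$ once a generator of $H^{g-1}(\Delta_\xi, \bbZ)$ is fixed, with the natural orientation arising from the logarithmic embedding of $\Delta_\xi$ into the trace-zero hyperplane of $F \otimes_\bbQ \bbR$. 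With this in hand, $\partial^k \cG(\xi)$ may be computed by specializing the Čech cocycle cone-by-cone: for each cone, Shintani's classical generating-function formula expresses the $k$-th derivative at $\xi$ as the value at $-k$ of the corresponding Shintani zeta function, and summing over the decomposition --- together with the canonical decomposition of $\cL(\xi\Delta, s)$ as a sum of Shintani zeta functions over a fundamental domain for the $\Delta_\xi$-action on $\cO_{F+}$ --- yields $\partial^k \cG(\xi) = \cL(\xi\Delta, -k)$.

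The main technical obstacle, I expect, lies in the simultaneous compatibility of all the structures in play --- the pole-cancellation on cocycle intersections, independence from the cone decomposition, $\Delta$-equivariance, the action of $\partial$, the group-cohomology identification at $\xi$, and the combinatorial matching with Shintani's formula. In particular, ensuring that the canonical orientation used in the group-cohomology identification at $\xi$ agrees with the one implicitly invoked in the Shintani decomposition of $\cL(\xi\Delta, s)$ will require careful tracking of signs and orientations throughout the Čech and group-cohomology formalism, and this is where I expect the bulk of the technical work to lie.
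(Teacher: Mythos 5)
Your high-level plan for Part (2) is on the right track, but your approach to Part (1) departs from the paper in a way that matters, and there is a technical point in Part (2) you have not addressed.

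\textbf{Part (1).} The paper does \emph{not} fix a Shintani decomposition and then prove independence of the choice. It sidesteps the independence question entirely: the Čech complex $C^\bullet(\frU/\Delta,\sO_\bbT)$ is built from the open cover $\frU=\{U_\alpha\}_{\alpha\in A}$ indexed by \emph{all} primitive totally positive elements $A\subset\cO_{F+}$, and the cochain $(\cG_\bsalpha)$ is defined on \emph{every} $g$-tuple $\bsalpha\in A^g$ by $\cG_\bsalpha=\sgn(\bsalpha)\cG_{\sigma_\bsalpha}(t)$, including degenerate tuples (where $\sgn(\bsalpha)=0$). There is no fan to choose, so there is nothing to prove independent. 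The cocycle condition $d^{g-1}(\cG_\bsalpha)=0$ is then a pointwise combinatorial identity on $(g+1)$-tuples, verified via the formal power series expansion and a characteristic-function identity (\cite{Yam10}*{Proposition 6.2}). Moreover, $\Delta$-invariance ($[\varepsilon]^*\cG_\bsalpha=\cG_{\varepsilon\bsalpha}$) is built in, so the cocycle lives directly in the equivariant complex; there is no separate descent step. Your proposal to form the cochain only from the cones of a chosen fan leaves it unclear how one obtains a well-defined element of the alternating product $\prod^\alt_{\bsalpha\in A^g}\Gamma(U_\bsalpha,\sO_\bbT)$ at all, let alone a closed one: the Čech differential mixes tuples from different fans. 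Your phrase ``contributions from different maximal cones differ only by Čech coboundaries'' conflates the cocycle condition (internal consistency of a single cochain) with the independence-of-fan question, and neither is addressed by the words as written. The paper's all-tuples construction dissolves both difficulties at once.

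\textbf{Part (2).} Your reduction of $H^{g-1}(\xi/\Delta_\xi,\sO_\xi)$ to $H^{g-1}(\Delta_\xi,\bbQ(\xi))$ and the identification with $\bbQ(\xi)$ via Dirichlet's unit theorem and an orientation is correct in outline and is essentially what the paper proves, though the paper makes the isomorphism explicit on cocycles by passing through the simplicial chain complex $C_\bullet(\Phi)$ of a Shintani decomposition realized as a triangulation of the torus $\cT_\xi=\Delta_\xi\backslash\bbR_1^I$, and pairing with the fundamental class. But your sketch silently assumes that a given Shintani decomposition can be used to specialize the cocycle at $\xi$ cone-by-cone. This fails in general: if some cone of the fan has a generator $\alpha$ with $\xi(\alpha)=1$, then $\xi\notin U_\alpha$ and the relevant Čech component cannot be evaluated at $\xi$. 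The paper handles this with Lemma \ref{lem: inclusion}, which perturbs the decomposition (replacing a bad generator $\alpha$ by $N\alpha+\beta$ for large $N$) so that every cone is generated by elements of $A_\xi$. Without this, the ``cone-by-cone'' specialization you invoke is not defined. This is a genuine gap in the proposal, not merely one of the sign/orientation issues you flag at the end.
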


We refer to the class $\cG$ as the \textit{Shintani generating class}.
If $F=\bbQ$, then we have $\Delta=\{1\}$, and the class $\cG$ is simply the rational function 
$\cG(t)=t/(1-t)\in H^0(U,\sO_{\bbG_m})=\Gamma(U,\sO_{\bbG_m})$.  Thus Theorem \ref{theorem: introduction} (2) 
coincides with Theorem \ref{theorem: classical generating} in this case.
For the case $F=\bbQ$ and also for the case of imaginary quadratic fields (see for example \cite{CW77}\cite{CW78}), 
canonical algebraic generating functions
of special values of Hecke $L$-functions play a crucial
role in relating the special values of Hecke $L$-functions
to arithmetic invariants.
However, up until now, the discovery of such a \textit{canonical} 
generating function has been elusive in the higher dimensional cases.
Our result suggests that the correct framework in the higher dimensional case is to consider 
equivariant cohomology classes instead of functions.  

Relation of our work to the results of
Charollois, Dasgupta, and Greenberg 
\cite{CDG14} was kindly pointed out to us by Peter Xu.
As a related result, the relation of special values of Hecke $L$-functions of totally 
real fields to the topological polylogarithm 
on a torus was studied by Be\u\i linson, Kings, and Levin in \cite{BKL18}.
The polylogarithm for general commutative group schemes were constructed by Huber and Kings \cite{HK18}.  Our discovery of the Shintani generating class arose from our attempt to explicitly describe 
various realizations of the polylogarithm for the algebraic torus $\bbT$.
In subsequent research, we will explore the arithmetic implications of our insight
(see for example \cite{BHY00}).

\tableofcontents

The content of this article is as follows.  In \S \ref{section: Lerch Zeta}, we will introduce the
Lerch zeta function $\cL(\xi\Delta, s)$ and show that
this function may be expressed non-canonically as a
linear sum of Shintani zeta functions. 
We will then review the multivariable generating function 
constructed by Shintani of the special values of Shintani zeta functions.
In \S \ref{section: equivariant}, we will define the equivariant cohomology of a scheme
with an action of a group, and will construct the equivariant \v Cech complex $C^\bullet(\frU/\Delta,\sF)$ which calculates
the equivariant cohomology of $U\coloneqq\bbT\setminus\{1\}$ with coefficients in an equivariant coherent sheaf $\sF$
on $U$.  In \S \ref{section: Shintani Class}, we will define in Proposition \ref{prop: Shintani generating class}
the Shintani generating class $\cG$, and in Lemma \ref{lem: differential} give the definition of the derivatives.
Finally in \S \ref{section: specialization}, we will give the proof of our main theorem, Theorem \ref{theorem: main}, 
which coincides with Theorem \ref{theorem: introduction} (2).

%
%
%
\section{Lerch Zeta Function}\label{section: Lerch Zeta}
%
%
%

In this section, we first introduce the Lerch zeta function for totally real fields.
Then we will then define the Shintani zeta function associated to a cone $\sigma$ and a function $\phi\colon\cO_F\rightarrow\bbC$
which factors through $\cO_F/\frf$ for some nonzero ideal $\frf\subset\cO_F$.
We will then describe the generating function of its values at nonpositive integers when $\phi$ is a finite additive character.

Let $\xi\in\Hom_\bbZ(\cO_F,\bbC^\times)$ be a $\bbC$-valued character on $\cO_F$ of finite order.
As in Definition \ref{def: Lerch} of \S\ref{section: introduction},
we define the Lerch zeta function for totally real fields by the series
\[
	\cL(\xi\Delta, s)\coloneqq\sum_{\alpha\in\Delta_\xi\backslash\cO_{F+}}\xi(\alpha)N(\alpha)^{-s},
\]
where $\Delta_\xi\coloneqq\{\varepsilon\in\Delta\mid \xi^\varepsilon=\xi\}$,
which may be continued analytically to the whole complex plane.

\begin{remark}
	Note that we have
	\[
		\cL(\xi\Delta, s)=\sum_{\alpha\in\Delta\backslash\cO_{F+}}
		\sum_{\varepsilon\in\Delta_\xi\backslash\Delta} \xi(\varepsilon\alpha)N(\alpha)^{-s}.
	\]
	Even though $\xi(\alpha)$ is not well-defined for $\alpha\in\Delta\backslash\cO_F$, 
	the sum $\sum_{\varepsilon\in\Delta_\xi\backslash\Delta}\xi(\varepsilon\alpha)$
	is well-defined for $\alpha\in\Delta\backslash\cO_F$.
\end{remark}

The importance of  $\cL(\xi\Delta, s)$ is in its relation to the Hecke $L$-functions of $F$.
Let $\frf$ be a nonzero integral ideal of $F$. 
We denote by $\Cl^+_F(\frf)\coloneqq I_\frf/P^+_\frf$
the strict ray class group modulo $\frf$ of $F$, where
$I_\frf$ is the group of fractional ideals of $F$ prime to $\frf$
and $P^+_\frf\coloneqq \{  (\alpha)  \mid \alpha\in F_+, \alpha\equiv 1 \operatorname{mod}^\times \frf\}$.
A finite Hecke character of $F$ of conductor $\frf$ is a character
\[
	\chi\colon\Cl^+_F(\frf)\rightarrow\bbC^\times.
\]
By \cite{Neu99}*{Chapter VII (6.9) Proposition}, there exists a unique character
$
	\chi_\fin\colon(\cO_F/\frf)^\times\rightarrow\bbC^\times
$
associated to $\chi$ such that $\chi((\alpha))=\chi_\fin(\alpha)$ for any $\alpha\in\cO_{F+}$ prime to $\frf$.  
In particular, we have $\chi_\fin(\varepsilon)=1$ for any $\varepsilon\in\Delta$.
Extending by \textit{zero}, we regard $\chi_\fin$ as functions on $\cO_F/\frf$ and $\cO_F$ with values in $\bbC$.

In what follows, we let $\bbT[\frf]\coloneqq\Hom(\cO_F/\frf,{\ol\bbQ}^\times)\subset\bbT(\ol\bbQ)$
be the set of $\frf$-torsion points of $\bbT$.
We say that a character $\chi$, $\chi_\fin$ or $\xi\in\bbT[\frf]$ is \textit{primitive},
if it does not factor respectively through $\Cl_F^+(\frf')$, $(\cO_F/\frf')^\times$ or $\cO_F/\frf'$
for any integral ideal $\frf'\neq\frf$ such that $\frf'|\frf$. 
Then we have the following.

\begin{lemma}\label{lemma: fourier}
	For any $\xi\in\bbT[\frf]$, let
	\[
		c_\chi(\xi)\coloneqq\frac{1}{\bsN(\frf)}\sum_{\beta\in\cO_F/\frf} \chi_\fin(\beta)\xi(-\beta).
	\]
	Then we have
	\[
		\chi_\fin(\alpha)=\sum_{\xi\in\bbT[\frf]}c_\chi(\xi)\xi(\alpha).
	\]
	Moreover, if $\chi_\fin$ is primitive, then we have $c_\chi(\xi)=0$ for any non-primitive $\xi$.
\end{lemma}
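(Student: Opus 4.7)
The first identity is Fourier inversion on the finite abelian group $(\cO_F/\frf,+)$, whose character group is exactly $\bbT[\frf]$; both sets have cardinality $\bsN(\frf)$. My plan is to substitute the definition of $c_\chi(\xi)$ into $\sum_\xi c_\chi(\xi)\xi(\alpha)$, interchange the two finite sums, and apply character orthogonality
\[
\sum_{\xi\in\bbT[\frf]}\xi(\alpha-\beta) = \bsN(\frf) \text{ if }\alpha\equiv\beta\pmod{\frf},\quad 0\text{ otherwise},
\]
which collapses the double sum to the single term $\chi_\fin(\alpha)$.

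For the vanishing statement, suppose $\xi\in\bbT[\frf]$ is non-primitive, so $\xi$ factors through $\cO_F/\frf'$ for some integral ideal $\frf'\mid\frf$ with $\frf'\neq\frf$; equivalently, $\xi$ is trivial on the image of $\frf'$ in $\cO_F/\frf$. The central ingredient will be a twisting element $\gamma\in\cO_F$ that is coprime to $\frf$, congruent to $1$ modulo $\frf'$, and satisfies $\chi_\fin(\gamma)\neq 1$. Its existence follows from the primitivity of $\chi_\fin$: the reduction map $(\cO_F/\frf)^\times\twoheadrightarrow(\cO_F/\frf')^\times$ is surjective by the Chinese remainder theorem, and a character factors through this quotient iff it is trivial on its kernel, so failure of factorization is precisely the existence of such a $\gamma$.

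Given $\gamma$, the plan is to apply the substitution $\beta\mapsto\gamma\beta$ in the sum defining $c_\chi(\xi)$. This is a bijection of $\cO_F/\frf$ respecting the extension by zero of $\chi_\fin$, since $\gamma$ is a unit modulo $\frf$. Using the multiplicativity $\chi_\fin(\gamma\beta)=\chi_\fin(\gamma)\chi_\fin(\beta)$ together with $\xi(-\gamma\beta)=\xi(-\beta)$ (which holds because $(\gamma-1)\beta\in\frf'$ and $\xi$ is trivial on $\frf'$ modulo $\frf$), the sum reproduces itself with an extra factor of $\chi_\fin(\gamma)$. Since $\chi_\fin(\gamma)\neq 1$, this forces $c_\chi(\xi)=0$.

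The only nontrivial point is producing the twisting element $\gamma$; once it is in hand, the vanishing is a one-line change of variables. No analytic input is required — the entire argument takes place in the finite quotient $\cO_F/\frf$.
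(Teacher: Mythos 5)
Your proof is correct and follows essentially the same route as the paper: Fourier inversion via character orthogonality for the first identity, and a change of variables $\beta\mapsto\gamma\beta$ with a twisting element $\gamma\equiv 1\pmod{\frf'}$, $\chi_\fin(\gamma)\neq 1$ for the vanishing statement. Your added justification for the existence of $\gamma$ (surjectivity of $(\cO_F/\frf)^\times\twoheadrightarrow(\cO_F/\frf')^\times$) is a small enrichment the paper leaves implicit, but the underlying argument is the same.
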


\begin{proof}
	The first statement follows from
	\[
		\sum_{\xi\in\bbT[\frf]}c_\chi(\xi)\xi(\alpha) = \frac{1}{\bsN(\frf)} \sum_{\beta\in\cO_F/\frf}  
		\chi_\fin(\beta)\biggl(\sum_{\xi\in\bbT[\frf]} \xi(\alpha-\beta)\biggr) = \chi_\fin(\alpha),
	\]
	where the last equality follows from the fact that $\sum_{\xi\in\bbT[\frf]} \xi(\alpha)=N(\frf)$ if $\alpha\equiv 0\pmod{\frf}$ and 
	$\sum_{\xi\in\bbT[\frf]} \xi(\alpha)=0$ if $\alpha\not\equiv 0\pmod{\frf}$.
	Next, suppose $\chi_\fin$ is primitive, and let $\frf'\neq\frf$ be an integral ideal of $F$ such that $\frf'|\frf$ and $\xi\in\bbT[\frf ']$. 
	Since $\chi_\fin$ is primitive, it does not factor through $\cO_F/\frf'$, hence there exists an element $\gamma\in\cO_F$ 
	prime to $\frf$
	such that $\gamma\equiv1\pmod{\frf'}$ and $\chi_\fin(\gamma)\neq 1$.  Then since $\xi\in\bbT[\frf']$, we have
	$\xi(\gamma\alpha)=\xi(\alpha)$ for any $\alpha\in\cO_F$.  This gives
	\begin{align*}
		c_\chi(\xi)=\frac{1}{\bsN(\frf)}\sum_{\beta\in\cO_F/\frf} \chi_\fin(\beta)\xi(-\beta)
		&=\frac{1}{\bsN(\frf)}\sum_{\beta\in\cO_F/\frf} \chi_\fin(\beta)\xi(-\gamma\beta)\\
		&=\frac{\ol\chi_\fin(\gamma)}{\bsN(\frf)}\sum_{\beta\in\cO_F/\frf} \chi_\fin(\gamma\beta)
		\xi(-\gamma\beta)=\ol\chi_\fin(\gamma)c_\chi(\xi).
	\end{align*}
	Since $\chi_\fin(\gamma)\neq 1$, we have $c_\chi(\xi)=0$ as desired.
\end{proof}

Note that since multiplication by $\varepsilon\in\Delta$ is bijective on $\cO_F/\frf$ and since $\chi_\fin(\varepsilon)=1$,
we have $c_\chi(\xi^\varepsilon)=c_\chi(\xi)$. Then we have the following.

\begin{proposition}\label{prop: Hecke}
	Assume that the narrow class number of $F$ is \textit{one},
	and let $\chi\colon\Cl^+_F(\frf)\rightarrow\bbC^\times$ be a finite primitive Hecke character of $F$ of 
	conductor $\frf\neq(1)$.
	Then for $U[\frf]\coloneqq\bbT[\frf]\setminus\{1\}$, we have
	\[
		L(\chi, s)=\sum_{\xi\in U[\frf]/\Delta} c_\chi(\xi)\cL(\xi\Delta, s).
	\]
\end{proposition}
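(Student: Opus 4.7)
The plan is to start from the Euler-product/Dirichlet-series definition of $L(\chi,s)$ and convert it into a sum over totally positive integers modulo units, then apply the Fourier expansion of Lemma \ref{lemma: fourier} and rearrange into $\Delta$-orbits.

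First, since the narrow class number of $F$ is one, every integral ideal $\fra$ of $F$ prime to $\frf$ has the form $(\alpha)$ for some $\alpha \in \cO_{F+}$, with $\alpha$ unique up to multiplication by $\Delta$. Using the fact from \cite{Neu99} that $\chi((\alpha))=\chi_\fin(\alpha)$ for $\alpha\in\cO_{F+}$ prime to $\frf$, together with the convention that $\chi_\fin$ is extended by zero outside $(\cO_F/\frf)^\times$, I would rewrite
\[
	L(\chi,s)=\sum_{\alpha\in\Delta\backslash\cO_{F+}}\chi_\fin(\alpha)N(\alpha)^{-s}.
\]
This step requires observing that $\chi_\fin$ is $\Delta$-invariant (since $\chi_\fin(\varepsilon)=1$ for $\varepsilon\in\Delta$) and that $N(\alpha)$ depends only on the class of $\alpha$ modulo $\Delta$, so the right-hand side is well-defined.

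Second, I would substitute the Fourier expansion $\chi_\fin(\alpha)=\sum_{\xi\in\bbT[\frf]}c_\chi(\xi)\xi(\alpha)$ from Lemma \ref{lemma: fourier}. Since individual terms $\xi(\alpha)$ are not well-defined on $\Delta\backslash\cO_F$, I must first regroup the $\xi$-sum by $\Delta$-orbits before pulling it outside the $\alpha$-sum. Using the identity $c_\chi(\xi^\varepsilon)=c_\chi(\xi)$ noted before the proposition, each orbit $\xi\Delta$ contributes
\[
	\sum_{\xi'\in\xi\Delta}c_\chi(\xi')\xi'(\alpha)=c_\chi(\xi)\sum_{\varepsilon\in\Delta_\xi\backslash\Delta}\xi(\varepsilon\alpha),
\]
and this inner sum is well-defined on $\alpha\in\Delta\backslash\cO_F$. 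Interchanging the order of summation and recognizing, by the Remark following Definition \ref{def: Lerch}, that
\[
	\sum_{\alpha\in\Delta\backslash\cO_{F+}}\sum_{\varepsilon\in\Delta_\xi\backslash\Delta}\xi(\varepsilon\alpha)N(\alpha)^{-s}=\cL(\xi\Delta,s),
\]
I obtain the identity $L(\chi,s)=\sum_{\xi\in\bbT[\frf]/\Delta}c_\chi(\xi)\cL(\xi\Delta,s)$.

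Third, I must restrict the index set from $\bbT[\frf]/\Delta$ to $U[\frf]/\Delta$, i.e.\ discard the orbit of the trivial character $\xi=1$. Because $\frf\neq(1)$, the trivial character $\xi=1$ factors through $\cO_F/\frf'$ for any proper divisor $\frf'$ of $\frf$, hence is non-primitive. Since $\chi_\fin$ is primitive by assumption, the second part of Lemma \ref{lemma: fourier} yields $c_\chi(1)=0$, so the $\xi=1$ term drops out, completing the proof.

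The routine bookkeeping is mostly mechanical; the only genuinely subtle point is the order of operations in Step 2, namely that the $\Delta$-orbit regrouping must be performed \emph{before} swapping the $\xi$-sum with the $\alpha$-sum, since the individual terms $\xi(\alpha)$ are ill-defined on $\Delta\backslash\cO_F$ while the orbit sums are not. Once this is handled carefully, together with the convergence in the region $\Re(s)>1$ (which justifies all interchanges of summation), the analytic continuation to all of $\bbC$ of both sides follows by uniqueness.
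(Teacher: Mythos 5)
Your proof is correct and is essentially the paper's argument run in reverse: you start from $L(\chi,s)=\sum_{\alpha\in\Delta\backslash\cO_{F+}}\chi_\fin(\alpha)N(\alpha)^{-s}$, expand $\chi_\fin$ via Lemma~\ref{lemma: fourier}, regroup into $\Delta$-orbits using $c_\chi(\xi^\varepsilon)=c_\chi(\xi)$, and drop the $\xi=1$ term, whereas the paper begins from $\sum_{\xi\in\bbT[\frf]/\Delta}c_\chi(\xi)\cL(\xi\Delta,s)$ and collapses it to the Hecke $L$-function via the same regrouping and the same Fourier identity. Your additional care about the order of regrouping versus interchanging sums, and your explanation that $c_\chi(1)=0$ because $\xi=1$ is non-primitive when $\frf\neq(1)$, are both correct and make explicit two points the paper leaves implicit.
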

\begin{proof}
	By definition and Lemma \ref{lemma: fourier}, we have
	\begin{align*}
		\sum_{\xi\in\bbT[\frf]/\Delta} c_\chi(\xi)\cL(\xi\Delta, s)
		&=\sum_{\xi\in\bbT[\frf]/\Delta}\sum_{\alpha\in\Delta\backslash\cO_{F+}}
		\sum_{\varepsilon\in\Delta_\xi\backslash\Delta}c_\chi(\xi)\xi(\varepsilon\alpha)\bsN(\alpha)^{-s}\\
		&=\sum_{\alpha\in\Delta\backslash\cO_{F+}}\sum_{\xi\in\bbT[\frf]/\Delta}\sum_{\varepsilon\in\Delta_\xi\backslash\Delta}
		c_\chi(\xi^\varepsilon)\xi^\varepsilon(\alpha)\bsN(\alpha)^{-s}\\
		&=\sum_{\alpha\in\Delta\backslash\cO_{F+}}\sum_{\xi\in\bbT[\frf]}c_\chi(\xi)\xi(\alpha)\bsN(\alpha)^{-s}\\
		&=\sum_{\alpha\in\Delta\backslash\cO_{F+}}
		\chi_\fin(\alpha)\bsN(\alpha)^{-s} = \sum_{\fra\subset\cO_F} \chi(\fra)\bsN\!\fra^{-s}.
	\end{align*}
	Our assertion follows from the definition of the Hecke $L$-function and the fact that $c_\chi(\xi)=0$ for $\xi=1$.
\end{proof}

\begin{remark}\label{rem: class number one}
	We assumed the condition on the narrow class number for simplicity.
	By considering the Lerch zeta functions corresponding to additive characters in 
	$\Hom_\bbZ(\fra,\bbC^\times)$ for general fractional ideals $\fra$ of $F$,
	we may express the Hecke $L$-functions when the narrow class number of 
	$F$ is greater than \textit{one}.
\end{remark}

We will next define the Shintani zeta function 
associated to a cone.
Note that we have a canonical isomorphism
\[
	F\otimes\bbR\cong\bbR^I\coloneqq \prod_{\tau\in I}\bbR,  \qquad \alpha\otimes 1 \mapsto (\alpha^\tau),
\]
where $I$ is the set of embeddings $\tau\colon F\hookrightarrow\bbR$ 
and we let $\alpha^\tau\coloneqq\tau(\alpha)$ for any embedding $\tau\in I$.
We denote by $\bbR^I_+\coloneqq\prod_{\tau\in I}\bbR_+$
the set of totally positive elements of $\bbR^I$, where $\bbR_+$ is the set of positive real numbers.

\begin{definition}
	A rational closed polyhedral cone in $\bbR^I_+\cup\{0\}$, which we simply call a cone, is any set of the form
	\[
		\sigma_\bsalpha\coloneqq\{  x_1 \alpha_1+\cdots+x_m\alpha_m \mid x_1,\ldots,x_m \in\bbR_{\geq0}\}
	\]
	for some $\bsalpha=(\alpha_1,\ldots,\alpha_m)\in\cO_{F+}^m$.  
	In this case, we say that  $\bsalpha$ is a generator of $\sigma_\bsalpha$.
	By considering the case $m=0$,  we see that $\sigma=\{0\}$ is a cone.
\end{definition}

We define the dimension  $\dim\sigma$  of a cone $\sigma$ to be the dimension 
of the $\bbR$-vector space generated by $\sigma$.
In what follows, we fix a numbering $I=\{\tau_1,\ldots,\tau_g\}$ of elements in $I$.
For any subset $R\subset\bbR_+^I$, we let
\[
	\wh R\coloneqq\{(u_{\tau_1},\ldots,u_{\tau_g})\in\bbR_+^I \mid\exists\delta>0,\,0<\forall\delta'<\delta,(u_{\tau_1},\ldots,u_{\tau_{g-1}},
	u_{\tau_g}-\delta')\in R\}.
\]

\begin{definition}
	Let $\sigma$ be a cone, and let $\phi\colon\cO_F\rightarrow\bbC$ be a $\bbC$-valued function on $\cO_F$
	which factors through $\cO_F/\frf$ for some nonzero ideal $\frf\subset\cO_F$.
	We define the \textit{Shintani zeta function} $\zeta_\sigma(\phi,\bss)$ associated to a cone $\sigma$ and
	function $\phi$ by the series
	\begin{equation}\label{eq: Shintani zeta}
		\zeta_\sigma(\phi,\bss)\coloneqq\sum_{\alpha\in\wh\sigma\cap\cO_F} \phi(\alpha) \alpha^{-\bss},
	\end{equation}
	where $\bss=(s_{\tau})\in\bbC^I$ and $\alpha^{-\bss}\coloneqq\prod_{\tau\in I}(\alpha^{\tau})^{-s_{\tau}}$.
	The series \eqref{eq: Shintani zeta} converges if $\Re(s_{\tau})>1$ for any $\tau\in I$.
\end{definition}

By \cite{Shi76}*{Proposition 1}, the function $\zeta_\sigma(\phi,\bss)$ has a meromorphic continuation to any $\bss\in\bbC^I$.
If we let $\bss=(s,\ldots, s)$ for $s\in\bbC$, then we have
\begin{equation}\label{eq: diagonal}
	\zeta_\sigma(\phi,(s,\ldots, s))=\sum_{\alpha\in\wh\sigma\cap\cO_F} \phi(\alpha) N(\alpha)^{-s}.
\end{equation}

Shintani constructed the generating function of values of $\zeta_\sigma(\xi, \bss)$ 
at nonpositive integers for additive characters $\xi\colon\cO_F\rightarrow\bbC^\times$ of finite order,
given as follows.
In what follows, we view $z\in F\otimes\bbC$ as an element 
$z = (z_\tau)\in\bbC^I$ through the canonical isomorphism $F\otimes\bbC\cong\bbC^I$.
%

\begin{definition}\label{def: generating}
	Let $\sigma=\sigma_\bsalpha$ be a $g$-dimensional cone generated by 
	$\bsalpha=(\alpha_1,\ldots,\alpha_g)\in\cO_{F+}^g$,
	and we let $P_\bsalpha\coloneqq\{ x_1\alpha_1+\cdots+x_g\alpha_g\mid
	\forall i\,\,0\leq x_i < 1\}$
	be the parallelepiped spanned by $\alpha_1,\ldots,\alpha_g$.
	We define $\sG_{\sigma}(z)$ to be the meromorphic function 
	on $F\otimes\bbC\cong\bbC^I$ given by
	\[
		\sG_\sigma(z)\coloneqq
		\frac{\sum_{\alpha\in \wh P_\bsalpha\cap\cO_F}e^{2\pi i\Tr(\alpha z)}}{{\bigl(1-e^{2\pi i\Tr(\alpha_1z)}}\bigr)
		\cdots\bigl(1-e^{2\pi i\Tr(\alpha_gz)}\bigr)},
	\]
	where $\Tr(\alpha z)\coloneqq\sum_{\tau\in I}\alpha^\tau z_\tau$ for any $\alpha\in\cO_F$.
	The definition of $\sG_\sigma(z)$ 
	depends only on the cone and is independent of the choice of the generator $\bsalpha$.
\end{definition}

\begin{remark}
	If $F=\bbQ$ and $\sigma=\bbR_{\geq0}$, then we have
	$
		\sG_\sigma(z) = \frac{e^{2\pi iz}}{1-e^{2\pi i z}}.
	$
\end{remark}

For $\bsk=(k_\tau)\in\bbN^I$,  we denote $\partial^\bsk\coloneqq\prod_{\tau\in I}\partial_\tau ^{k_\tau}$, 
where $\partial_\tau :=\frac{1}{2\pi i}\frac{\partial}{\partial z_\tau}$. 
For $u\in F$, 
we let $\xi_u$ be the finite additive character on $\cO_F$ defined by $\xi_u(\alpha)\coloneqq e^{2\pi i\Tr(\alpha u)}$.
We note that any additive character on $\cO_F$ with values in $\bbC^\times$ of finite order
is of this form for some $u\in F$.
The following theorem, based on the work of Shintani, is standard (see for example \cite{CN79}*{Th\'eor\`eme 5}, \cite{Col88}*{Lemme 3.2}).

\begin{theorem}\label{theorem: Shintani} 
	Let $\bsalpha$ and $\sigma$ be as in Definition \ref{def: generating}.
	For any $u\in F$ satisfying $\xi_u(\alpha_j)\neq1$ for $j=1,\ldots,g$,
	we have
	\[
		\partial^\bsk \sG_\sigma(z)\big|_{z=u\otimes1}=\zeta_\sigma(\xi_u,-\bsk).
	\]
\end{theorem}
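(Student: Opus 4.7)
The plan is to realize $\sG_\sigma(z)$ as a Fourier--Laplace generating series for $\wh\sigma \cap \cO_F$ on a suitable tube domain, and then extract the values at nonpositive integers by the standard Mellin transform and Hankel contour argument. The conceptual heart is the series expansion; the rest is bookkeeping.

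First, I would establish, on the tube $T \coloneqq \{z \in F \otimes \bbC : \Im(z_\tau) > 0 \text{ for all } \tau \in I\}$, the absolutely convergent expansion
\[
	\sG_\sigma(z) = \sum_{\alpha \in \wh\sigma \cap \cO_F} e^{2\pi i \Tr(\alpha z)}.
\]
On $T$, each factor $\bigl(1 - e^{2\pi i \Tr(\alpha_j z)}\bigr)^{-1}$ can be expanded as a geometric series since $\alpha_j$ is totally positive, and the combinatorial input is the disjoint tiling
\[
	\wh\sigma = \bigsqcup_{\bsn \in \bbZ_{\geq 0}^g} \bigl(\wh P_\bsalpha + n_1 \alpha_1 + \cdots + n_g \alpha_g\bigr),
\]
which follows directly from the definition of $\wh{\cdot}$ combined with the unique expression of any $\alpha \in \sigma$ in the basis $\bsalpha$.

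Next, setting $z = u \otimes 1 + i\bsy/(2\pi)$ with $\bsy \in \bbR_+^I$ and $\Phi(\bsy) \coloneqq \sG_\sigma(u \otimes 1 + i\bsy/(2\pi))$ gives $\Phi(\bsy) = \sum_{\alpha \in \wh\sigma \cap \cO_F} \xi_u(\alpha) e^{-\Tr(\alpha \bsy)}$. A term-by-term multi-variable Mellin transform then yields, for $\Re(s_\tau) > 1$,
\[
	\int_{\bbR_+^I} \Phi(\bsy) \, \bsy^{\bss - \bone} \, d\bsy = \prod_{\tau \in I} \Gamma(s_\tau) \cdot \zeta_\sigma(\xi_u, \bss).
\]
I would then deform each one-dimensional integral $\int_0^\infty$ into a Hankel contour $\gamma$ wrapping counter-clockwise around the origin. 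This is justified because the hypothesis $\xi_u(\alpha_j) \neq 1$ makes the denominator of $\sG_\sigma$ nonvanishing at $z = u \otimes 1$, so $\Phi$ extends holomorphically to a neighborhood of $\bsy = 0$, while $\Phi$ decays exponentially as any $y_\tau \to +\infty$. The standard computation yields the entire-function identity
\[
	\prod_\tau \bigl(e^{2\pi i s_\tau} - 1\bigr) \Gamma(s_\tau) \cdot \zeta_\sigma(\xi_u, \bss) = \int_{\gamma^g} \Phi(\bsy) \, \bsy^{\bss - \bone} \, d\bsy,
\]
which in particular furnishes the meromorphic continuation of $\zeta_\sigma(\xi_u, \bss)$.

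Finally I specialize $\bss = -\bsk$. The iterated residue theorem evaluates the Hankel integral as $(2\pi i)^g (\bsk!)^{-1} \partial^{\bsk}_{\bsy} \Phi(\bsy)\big|_{\bsy = 0}$, while Taylor expansion at $s_\tau = -k_\tau$ produces $\prod_\tau \bigl(e^{2\pi i s_\tau} - 1\bigr) \Gamma(s_\tau)\big|_{\bss = -\bsk} = (2\pi i)^g (-1)^{|\bsk|}/\bsk!$. The chain rule together with $\partial_\tau = (2\pi i)^{-1} \partial/\partial z_\tau$ gives $\partial^{\bsk}_{\bsy} \Phi(\bsy)\big|_{\bsy = 0} = (-1)^{|\bsk|} \partial^\bsk \sG_\sigma(z)\big|_{z = u \otimes 1}$, so the two signs $(-1)^{|\bsk|}$ cancel to produce the desired equality. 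The main obstacle I anticipate is the careful bookkeeping of signs, factors of $2\pi i$, and logarithmic branches across the multi-variable Hankel deformation, together with verifying the tiling of $\wh\sigma$ compatibly with the built-in shift convention of the $\wh{\cdot}$ operation; both are standard but error-prone.
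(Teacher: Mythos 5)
Your argument is correct and is essentially the standard Shintani--Cassou-Nogu\`es argument: expand $\sG_\sigma$ as the Fourier generating series $\sum_{\alpha\in\wh\sigma\cap\cO_F}e^{2\pi i\Tr(\alpha z)}$ on the tube domain via the geometric series and the tiling of $\wh\sigma$ by $\wh{\cdot}$-translates of $\wh P_\bsalpha$, take the Mellin transform to recover $\prod_\tau\Gamma(s_\tau)\cdot\zeta_\sigma(\xi_u,\bss)$, analytically continue by an iterated Hankel contour (the hypothesis $\xi_u(\alpha_j)\neq1$ being exactly what keeps the denominator nonvanishing near $\bsy=0$), and match the residue at $\bss=-\bsk$ against the Taylor data of $\prod_\tau(e^{2\pi is_\tau}-1)\Gamma(s_\tau)$. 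The paper does not give its own proof --- it explicitly declares the theorem standard and cites Cassou-Nogu\`es (Th\'eor\`eme 5) and Colmez (Lemme 3.2) --- and your proof is precisely the argument those references give, so this is the same approach.
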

Note that the condition  $\xi_u(\alpha_j)\neq1$ for $j=1,\ldots,g$ ensures that  $z=u\otimes 1$
does not lie on the poles of 
the function $\sG_\sigma(z)$.

The Lerch zeta function $\cL(\xi\Delta, s)$ may be expressed as a finite sum of 
functions $\zeta_\sigma(\xi,(s,\ldots, s))$ 
using the Shintani decomposition.  We first review the definition of the Shintani decomposition.
We say that a cone $\sigma$ is \textit{simplicial}, if there exists a generator of $\sigma$ 
that is linearly independent over $\bbR$.
Any cone generated by a subset of such a generator is called a \textit{face} of $\sigma$.
A simplicial fan $\Phi$ is a set of simplicial cones such that 
for any $\sigma\in\Phi$, any face of $\sigma$ 
is also in $\Phi$, and for any cones $\sigma,\sigma'\in\Phi$, the intersection $\sigma\cap\sigma'$ is 
a common face of $\sigma$ and $\sigma'$.

A version of Shintani decomposition that we will use in this article is as follows.

\begin{definition}\label{def: Shintani}
	A \textit{Shintani decomposition} is a simplicial fan $\Phi$
	satisfying the following properties.
	\begin{enumerate}
		\item $\bbR_+^I\cup\{0\}=\coprod_{\sigma\in\Phi}\sigma^\circ$, where $\sigma^\circ$ is the relative interior of $\sigma$,
		i.e., the interior of $\sigma$ in the $\bbR$-linear span of $\sigma$.
		\item For any $\sigma\in\Phi$ and $\varepsilon\in\Delta$, we have $\varepsilon\sigma\in\Phi$.
		\item The quotient $\Delta\backslash\Phi$ is a finite set.
	\end{enumerate}
\end{definition}

We may obtain such decomposition
by slightly modifying the construction of Shintani \cite{Shi76}*{Theorem 1}
(see also  \cite{Hid93}*{\S2.7 Theorem 1}, \cite{Yam10}*{Theorem 4.1}).
Another construction was given by Ishida \cite{Ish92}*{p.84}.
For any integer $q\geq0$,
we denote by $\Phi_{q+1}$ the subset of $\Phi$ consisting of cones of dimension $q+1$.
Note that by \cite{Yam10}*{Proposition 5.6}, $\Phi_g$ satisfies 
\begin{equation}\label{eq: upper closure}
	\bbR_+^I=\coprod_{\sigma\in\Phi_g}\wh\sigma.
\end{equation}
This gives the following result.

\begin{proposition}
	Let $\xi\colon\cO_F\rightarrow\bbC^\times$ be a  character of finite order, and $\Delta_\xi\subset\Delta$ 
	its isotropic subgroup.
	If $\Phi$ is a Shintani decomposition, then we have
	\begin{equation}\label{eq: Shintani}
		\cL(\xi\Delta, s) = \sum_{\sigma\in\Delta_\xi\backslash\Phi_g}\zeta_\sigma(\xi,(s,\ldots,s)).
	\end{equation}
\end{proposition}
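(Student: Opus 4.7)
The plan is to recast the stated identity as a combinatorial statement about partitions of $\cO_{F+}$, starting from the disjoint union $\bbR_+^I = \coprod_{\sigma \in \Phi_g} \wh\sigma$ provided by \eqref{eq: upper closure}. Intersecting with $\cO_F$ and using $\cO_{F+} = \cO_F \cap \bbR_+^I$ yields the partition $\cO_{F+} = \coprod_{\sigma \in \Phi_g}(\wh\sigma \cap \cO_F)$. Via the specialization \eqref{eq: diagonal}, the right-hand side of \eqref{eq: Shintani} becomes $\sum_{[\sigma] \in \Delta_\xi \backslash \Phi_g} \sum_{\alpha \in \wh\sigma \cap \cO_F} \xi(\alpha) N(\alpha)^{-s}$ for chosen representatives $\sigma$ of each orbit, so the proposition reduces to the claim that the disjoint union of such $\wh\sigma \cap \cO_F$ over orbit representatives is a fundamental domain for the $\Delta_\xi$-action on $\cO_{F+}$.

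To establish this, I would need two ingredients. First, the thickening is $\Delta$-equivariant: for $\varepsilon \in \Delta$, the identity $\varepsilon\wh\sigma = \wh{\varepsilon\sigma}$ follows by direct unpacking of the definition of $\wh{\cdot}$, since $\varepsilon^{\tau_g} > 0$ permits rescaling the perturbation parameter $\delta'$. Second, $\Delta$ (and hence $\Delta_\xi$) acts freely on $\Phi_g$: if $\varepsilon\sigma = \sigma$ for some $g$-dimensional simplicial cone $\sigma$, then $\varepsilon$ permutes the $g$ extremal rays, so $\varepsilon^{g!}$ fixes each ray and acts on each generator $\alpha_i$ as a positive scalar $c_i$; comparing components under the embeddings and using that each $\alpha_i^{\tau_j} > 0$ forces $(\varepsilon^{g!})^{\tau_1} = \cdots = (\varepsilon^{g!})^{\tau_g}$ to a common constant $c > 0$, and $N(\varepsilon^{g!}) = c^g = 1$ gives $c = 1$; hence $\varepsilon^{g!} = 1$, and torsion-freeness of $\Delta$ (the only torsion in $\cO_F^\times$ is $\pm 1$, which fails total positivity) yields $\varepsilon = 1$.

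With these facts in hand, every $\alpha \in \cO_{F+}$ lies in a unique $\wh{\sigma_\alpha}$, and equivariance implies $\varepsilon\alpha \in \wh{\varepsilon\sigma_\alpha}$ for $\varepsilon \in \Delta_\xi$; freeness then pins down exactly one translate $\varepsilon\alpha$ lying in $\wh{\sigma^*}$ for any chosen representative $\sigma^* \in [\sigma_\alpha]$. Using the $\Delta_\xi$-invariance of $\xi(\alpha) N(\alpha)^{-s}$, term-by-term rearrangement in the region of absolute convergence $\Re(s) > 1$ matches the two series, and analytic continuation extends the identity to all $s$. The delicate point is the freeness of the $\Delta$-action on $\Phi_g$: the permutation-of-rays argument hinges on simpliciality of each top-dimensional cone and on the torsion-freeness of $\Delta$, which ultimately uses the totally-real hypothesis on $F$; everything else is a straightforward Fubini-style reorganization.
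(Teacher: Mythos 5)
Your proof is correct and follows the same route as the paper: both hinge on the disjoint decomposition $\bbR_+^I=\coprod_{\sigma\in\Phi_g}\wh\sigma$ of \eqref{eq: upper closure} together with the diagonal specialization \eqref{eq: diagonal}, reducing the identity to the assertion that a choice of $\Delta_\xi$-orbit representatives in $\Phi_g$ yields, via $\wh{\cdot}$, a fundamental domain for $\Delta_\xi$ acting on $\cO_{F+}$. You make explicit two facts the paper's two-sentence proof uses silently—the $\Delta$-equivariance of the truncation operation $\wh{\cdot}$ and the freeness of the $\Delta$-action on $\Phi_g$—both of which are genuinely needed for injectivity of the orbit parametrization, and your verifications of them are correct.
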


\begin{proof}
	By  \eqref{eq: upper closure}, if $C$ is a representative of $\Delta_\xi\backslash\Phi_g$,
	then $\coprod_{\sigma\in C}\wh\sigma$ is a representative of the set $\Delta_\xi\backslash\bbR_+^I$.
	Our result follows from the definition of the Lerch zeta function and \eqref{eq: diagonal}.
\end{proof}

The expression \eqref{eq: Shintani} is non-canonical, since it depends on the choice of the Shintani decomposition.

%
%
%
\section{Equivariant Coherent Cohomology}\label{section: equivariant}
%
%
%

In this section, we will first give the definition of equivariant sheaves and equivariant cohomology
of a scheme with an action of a group.
As in \S\ref{section: introduction}, we let
\begin{equation}\label{eq: algebraic torus}
	\bbT\coloneqq \Hom_\bbZ(\cO_F,\bbG_m)
\end{equation}
be the algebraic torus over $\bbZ$ defined by Katz \cite[\S 1]{Katz81},
satisfying $\bbT(R)=\Hom_\bbZ(\cO_F,R^\times)$ for any $\bbZ$-algebra $R$.
We will then construct the \textit{equivariant \v Cech complex},
which is an explicit complex which may be used to describe equivariant cohomology of $U\coloneqq\bbT\setminus\{1\}$
with action of $\Delta$.

\begin{remark}
	In order to consider the values of Hecke $L$-functions when the narrow class number of $F$ is greater than
	\textit{one}  (cf. Remark \ref{rem: class number one}), then it would be necessary to consider the algebraic tori
	\[
		\bbT_\fra\coloneqq \Hom_\bbZ(\fra,\bbG_m)
	\]
	for general fractional ideals $\fra$ of $F$.
\end{remark}

We first review the basic facts concerning sheaves on schemes that are equivariant with respect to an action of a group.
Let $G$ be a group with identity $e$.  A $G$-scheme is a scheme $X$ equipped with 
a right action of $G$.  We denote by 
 $[u]\colon X \rightarrow X$ the action of $u\in G$, so that $[uv] = [v]\circ[u]$ for any $u,v\in G$ holds.
In what follows, we let $X$ be a $G$-scheme.

\begin{definition}\label{def: equivariant structure}
	A \textit{$G$-equivariant structure} on
	an $\sO_X$-module $\sF$ is a family of isomorphisms
	\[
		\iota_u\colon [u]^* \sF \xrightarrow\cong \sF
	\]
	for $u\in G$, such that $\iota_e = \id_\sF$ and the diagram
	\[\xymatrix{
		 [uv]^* \sF \ar[r]^-{\iota_{uv}}\ar@{=}[d] &  \sF  \\
		 [u]^*[v]^* \sF  \ar[r]^-{[u]^*\iota_v} &   [u]^*\sF \ar[u]_{\iota_u}
	}\]
	is commutative.  We call $\sF$ equipped with a $G$-equivariant structure a \textit{$G$-equivariant sheaf}.
\end{definition}

Note that the structure sheaf $\sO_X$ itself is naturally a $G$-equivariant sheaf.
For any $G$-equivariant sheaf $\sF$ on $X$,
we define the equivariant global section by $\Gamma(X/G,\sF)\coloneqq
\Hom_{\bbZ[G]}(\bbZ,\Gamma(X, \sF)) = \Gamma(X,\sF)^G$. 
Then the equivariant cohomology 
$H^m(X/G, -)$ is defined to be the
$m$-th right derived functor of $\Gamma(X/G,-)$.

Suppose we have a group homomorphism $\pi\colon G\rightarrow H$.
For a $G$-scheme $X$ and an $H$-scheme $Y$, we say that a morphism $f\colon X\rightarrow Y$
of schemes is \textit{equivariant} with respect to $\pi$, if we have $f\circ[u]=[\pi(u)]\circ f$ for any $u\in G$.
If $\sF$ is a $H$-equivariant sheaf on $Y$ and $f$ is equivariant, then $f^*\sF$ is naturally an $G$-equivariant sheaf on $X$
with the equivariant structure given by $f^*\iota_{\pi(u)}\colon [u]^* (f^*\sF) = f^* ([\pi(u)]^* \sF) \rightarrow f^*\sF$ 
for any $u\in G$, and $f$ induces the pull-back homomorphism
\begin{equation}\label{eq: pullback}
	f^*\colon H^m(Y/H,\sF) \rightarrow H^m(X/G, f^* \sF)
\end{equation}
on equivariant cohomology.

\medskip

We now consider our case of the algebraic torus $\bbT$.
For any $\alpha \in \cO_F$, the morphism $\bbT(R) \rightarrow R^\times$ defined
by mapping $\xi \in \bbT(R)$ to $\xi(\alpha) \in R^\times$
induces a morphism of group schemes $t^\alpha\colon\bbT \rightarrow \bbG_m$, which gives a rational function of $\bbT$.
Then we have
\[
	\bbT=\Spec\bbZ[t^\alpha\mid\alpha\in\cO_F],
\]
where $t^\alpha, t^{\alpha'}$ satisfies the relation $t^\alpha t^{\alpha'}=t^{\alpha+\alpha'}$ for any $\alpha,\alpha'\in\cO_F$.
If we take a basis $\alpha_1,\ldots,\alpha_g$ of $\cO_F$ as a $\bbZ$-module, then we have
\[
	\Spec\bbZ[t^\alpha\mid\alpha\in\cO_F]=\Spec\bbZ[ t^{\pm\alpha_1},\ldots,t^{\pm \alpha_g}]\cong\bbG_m^g.
\]
The action of $\Delta$ on $\cO_F$ by multiplication induces an action of $\Delta$ on $\bbT$.
Explicitly, the isomorphism $[\varepsilon]\colon\bbT\rightarrow\bbT$
for $\varepsilon\in\Delta$ is given by  $t^\alpha\mapsto t^{\varepsilon\alpha}$
for any $\alpha\in \cO_F$.  

\begin{definition}\label{def: twist}
	For any $\bsk=(k_{\tau})\in\bbZ^I$, we define a $\Delta$-equivariant sheaf $\sO_\bbT(\bsk)$ on $\bbT$
	as follows.
	As an $\sO_\bbT$-module we let
	$\sO_\bbT(\bsk)\coloneqq\sO_\bbT$. The $\Delta$-equivariant structure
	\[
		\iota_\varepsilon\colon[\varepsilon]^*\sO_\bbT\cong\sO_\bbT
	\]
	is given by multiplication by 
	$\varepsilon^{-\bsk}\coloneqq\prod_{\tau\in I}(\varepsilon^{\tau})^{-k_{\tau}}$ for any 
	$\varepsilon\in\Delta$.
	Note that for $\bsk, \bsk'\in\bbZ^I$, 
	we have $\sO_\bbT(\bsk)\otimes\sO_\bbT(\bsk')=\sO_\bbT(\bsk+\bsk')$.	
	For the case $\bsk=(k,\ldots,k)$, we have  $\varepsilon^{-\bsk}=N(\varepsilon)^{-k}=1$ for any $\varepsilon\in\Delta$, hence 
	$\sO_\bbT(\bsk)=\sO_\bbT$.
\end{definition}

The open subscheme $U\coloneqq\bbT\setminus\{1\}$ also carries a natural $\Delta$-scheme structure.
We will now construct the \textit{equivariant \v Cech complex},
which may be used to express the cohomology of
$U$ with coefficients in a $\Delta$-equivariant quasi-coherent $\sO_U$-module $\sF$.
For any $\alpha\in\cO_{F}$, we let $U_\alpha\coloneqq\bbT\setminus \{t^{\alpha}=1\}$.  
Then any $\varepsilon\in\Delta$ induces an isomorphism $[\varepsilon]\colon U_{\varepsilon\alpha}\rightarrow U_{\alpha}$.
We say that $\alpha\in\cO_{F+}$ is \textit{primitive} if $\alpha/N\not\in\cO_{F+}$ for any integer $N>1$.
In what follows, we let $A\subset\cO_{F+}$ be the set of primitive elements of $\cO_{F+}$.
Then 
\begin{enumerate}
	\item $\varepsilon A = A$ for any $\varepsilon\in\Delta$.
	\item The set $\frU\coloneqq\{U_\alpha\}_{\alpha\in A}$ gives an affine open covering of $U$.
\end{enumerate}
We note that for any simplicial cone $\sigma$ of dimension $m$, there exists a  generator $\bsalpha\in A^m$,
unique up to permutation of the components.

Let $q$ be an integer $\geq 0$.
For any $\bsalpha=(\alpha_0,\ldots,\alpha_{q})\in A^{q+1}$, 
we let $U_\bsalpha\coloneqq U_{\alpha_0}\cap\cdots\cap U_{\alpha_{q}}$, 
and we denote by $j_{\bsalpha}\colon U_\bsalpha\hookrightarrow U$
the inclusion.  We let
\[
	\sC^q(\frU,\sF)\coloneqq \prod_{\bsalpha\in A^{q+1}}^\alt  j_{\bsalpha*}j_\bsalpha^*\sF
\]
be the subsheaf of 
$ \prod_{\bsalpha\in A^{q+1}}  j_{\bsalpha*}j_\bsalpha^*\sF$ consisting of sections 
$\bss=(s_\bsalpha)$ such that $s_{\rho(\bsalpha)} = \sgn(\rho)s_\bsalpha$ for any $\rho\in\frS_{q+1}$
and $s_\bsalpha=0$ if $\alpha_i=\alpha_j$ for some $i\neq j$.
We define the differential
$
	d^q\colon\sC^q(\frU,\sF)\rightarrow\sC^{q+1}(\frU,\sF)
$
to be the usual alternating sum
\begin{equation}\label{eq: Cech differential}
	(d^qf)_{\alpha_0\cdots\alpha_{q+1}}\coloneqq\sum_{j=0}^{q+1}(-1)^j
	 f_{\alpha_0\cdots\check\alpha_{j}\cdots\alpha_{q+1}}\big|_{U_{(\alpha_0,\ldots,\alpha_{q+1})}\cap V}
\end{equation}
for any section $(f_\bsalpha)$ of $\sC^q(\frU,\sF)$ of each open set $V\subset U$.  
If we let $\sF\hookrightarrow \sC^0(\frU,\sF)$ be the natural inclusion, then we have the exact sequence
\[\xymatrix{
	0\ar[r]& \sF\ar[r]& \sC^0(\frU,\sF)\ar[r]^{d^0}&\sC^1(\frU,\sF)\ar[r]^{\quad d^1}&\cdots  \ar[r]^{d^{q-1}\quad}&\sC^q(\frU,\sF)\ar[r]^{\quad d^q}&\cdots.
}\]

We next consider the action of $\Delta$.
For any $\bsalpha\in A^{q+1}$ and $\varepsilon\in\Delta$, we have a commutative diagram
\[
	\xymatrix{%
		U_{\varepsilon\bsalpha}\ar@{^{(}->}[r]^{j_{\varepsilon\bsalpha}}\ar[d]_{[\varepsilon]}^\cong&U\ar[d]^\cong_{[\varepsilon]}\\
		U_\bsalpha\ar@{^{(}->}[r]^{j_\bsalpha} & U,
	}%
\]
where $\varepsilon\bsalpha\coloneqq(\varepsilon\alpha_0,\ldots,\varepsilon\alpha_{q})$. 
Then we have an isomorphism
\[
	 [\varepsilon]^* j_{\bsalpha*}j_\bsalpha^*\sF
	 \cong j_{\varepsilon\bsalpha*}j_{\varepsilon\bsalpha}^*[\varepsilon]^*\sF
	 \xrightarrow\cong  j_{\varepsilon\bsalpha*}j_{\varepsilon\bsalpha}^*\sF,
\]
where the last isomorphism is induced by 
the $\Delta$-equivariant structure $\iota_\varepsilon\colon[\varepsilon]^*\sF\cong\sF$.
This induces an isomorphism
$
	\iota_\varepsilon\colon[\varepsilon]^*\sC^q(\frU,\sF)\xrightarrow\cong\sC^q(\frU,\sF),
$
which is compatible with the differential \eqref{eq: Cech differential}.  Hence 
$\sC^\bullet(A,\sF)$ is a complex of $\Delta$-equivariant sheaves on $U$.

\begin{proposition}\label{proposition: acyclic}
	The sheaf $\sC^q(\frU,\sF)$ is acyclic with respect to the functor $\Gamma(U/\Delta, -)$.
\end{proposition}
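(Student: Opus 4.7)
My plan is to prove the acyclicity by exhibiting $\sC^q(\frU,\sF)$ as a $\Delta$-coinduced equivariant sheaf and then invoking a Shapiro-type identification to reduce the equivariant cohomology to ordinary sheaf cohomology on $U$, which vanishes by affineness of the $U_\bsalpha$.

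The central structural observation is that $\Delta$ acts \emph{freely} on $A^{q+1}$ for every $q\geq 0$: if $\varepsilon\in\Delta$ satisfies $\varepsilon\alpha_0=\alpha_0$ for some $\alpha_0\in A$, then $\varepsilon=1$, since $\alpha_0$ is nonzero and $\cO_F$ is a domain. Choosing a set $R\subset A^{q+1}$ of representatives for the $\Delta\times\frS_{q+1}$-action on tuples with pairwise distinct entries, I would use the natural isomorphism $[\varepsilon]^*(j_{\bsalpha*}j_\bsalpha^*\sF)\cong j_{\varepsilon\bsalpha*}j_{\varepsilon\bsalpha}^*\sF$ induced by the $\Delta$-equivariant structure on $\sF$ to rewrite
\[
\sC^q(\frU,\sF)\cong\prod_{\bsalpha\in R}\prod_{\varepsilon\in\Delta}[\varepsilon]^*\bigl(j_{\bsalpha*}j_\bsalpha^*\sF\bigr)\cong\operatorname{Coind}_{\{1\}}^{\Delta}\sH_q,
\]
where $\sH_q\coloneqq\prod_{\bsalpha\in R}j_{\bsalpha*}j_\bsalpha^*\sF$ is a plain (non-equivariant) $\sO_U$-module. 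Since coinduction from the trivial subgroup is right adjoint to the exact forgetful functor, it preserves injectives, and a Shapiro-type argument should yield a natural isomorphism $H^m(U/\Delta,\sC^q(\frU,\sF))\cong H^m(U,\sH_q)$.

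It then suffices to show $H^m(U,\sH_q)=0$ for $m>0$. For each individual $\bsalpha$, the morphism $j_\bsalpha\colon U_\bsalpha\hookrightarrow U$ is an affine open immersion, so $R^ij_{\bsalpha*}=0$ on quasi-coherent sheaves, and $U_\bsalpha$ is itself affine; this gives $H^m(U,j_{\bsalpha*}j_\bsalpha^*\sF)=H^m(U_\bsalpha,j_\bsalpha^*\sF)=0$ for $m>0$. Applying $j_{\bsalpha*}$ (which preserves injectives) to an injective resolution of $j_\bsalpha^*\sF$ on each $U_\bsalpha$ and then taking the product of the resulting resolutions produces a resolution of $\sH_q$ by injective sheaves, since products of injective sheaves remain injective.

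The main technical obstacle I anticipate is verifying that the infinite product of injective resolutions of the factors is itself a resolution of $\sH_q$---equivalently, that the product functor is sufficiently exact on the relevant subcategory---which is the same issue already implicit in the Shapiro step, since coinduction from the trivial subgroup to the infinite group $\Delta$ is itself an infinite product. I would address this by working with quasi-coherent $\sF$ on the Noetherian scheme $U=\bbT\setminus\{1\}$, where these products are well-behaved enough to yield the desired cohomological computation, after which the acyclicity of $\sC^q(\frU,\sF)$ for $\Gamma(U/\Delta,-)$ follows.
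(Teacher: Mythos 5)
Your proposal is correct and rests on exactly the same two pillars as the paper's proof: exhibiting $\sC^q(\frU,\sF)$ as a $\Delta$-coinduced object (so that Shapiro's lemma applies, using the free action of $\Delta$ on ordered tuples in $A$) and affine vanishing of $H^{>0}(U,j_{\bsalpha*}j_\bsalpha^*\sF)$, together with care about infinite products. The organizational difference is real but modest: the paper runs the Grothendieck spectral sequence $E_2^{a,b}=H^a(\Delta,H^b(U,\sC^q(\frU,\sF)))$, kills the $b\neq 0$ columns by affine vanishing plus Lemma~\ref{lem: 3.5}, and then applies Shapiro at the level of $\bbZ[\Delta]$-modules to the $b=0$ row by identifying $H^0(U,\sC^q(\frU,\sF))$ with a coinduced module $\Hom_\bbZ(\bbZ[\Delta],M)$; you instead perform Shapiro once, at the sheaf level, by identifying $\sC^q(\frU,\sF)$ with $\prod_{\varepsilon\in\Delta}[\varepsilon]^*\sH_q$ and reducing $H^m(U/\Delta,-)$ directly to $H^m(U,\sH_q)$. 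The technical obstacle you flag --- that coinduction is an infinite product and one must check it carries an injective resolution of $\sH_q$ to a resolution of $\sC^q(\frU,\sF)$ --- is precisely the content of the paper's Lemma~\ref{lem: 3.5}, whose proof works section-wise on affine opens (where exactness of products of abelian groups is immediate) and then passes to stalks by a direct limit; invoking that lemma, or its proof, closes the only gap in your argument. What the paper's spectral-sequence version buys is that it never needs to assert that $\operatorname{Coind}$ preserves resolutions, only that products of quasi-coherent cohomology behave; what your version buys is a single clean reduction $H^m(U/\Delta,\sC^q(\frU,\sF))\cong H^m(U,\sH_q)$ without manipulating the $E_2$ page.
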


\begin{proof}
	By definition, the functor $\Gamma(U/\Delta, -)$ is the composite of the functors  
	$\Gamma(U,-)$ and $\Hom_{\bbZ[\Delta]}(\bbZ,-)$.  	
	Standard facts concerning the composition of functors shows that we have a spectral sequence
	\[
		E_2^{a,b}=H^a\bigl(\Delta, H^b(U, \sC^q(\frU,\sF))\bigr)\Rightarrow H^{a+b}(U/\Delta, \sC^q(\frU,\sF)).
	\]
	We first prove that $H^b(U, \sC^q(\frU,\sF))=0$ if $b\neq0$.
	If we fix some total order on the set $A$, then we have 
	\[
		\sC^q(\frU,\sF)\cong\prod_{\alpha_0<\cdots<\alpha_q} j_{\bsalpha*}j^*_\bsalpha\sF,
	\]	
	and each component $j_{\bsalpha*}j^*_\bsalpha\sF$ is acyclic for the functor $\Gamma(U,-)$
	since $U_\bsalpha$ is affine.
	Therefore $\sC^q(\frU,\sF)$ is acyclic by Lemma \ref{lem: 3.5} below.
	It is now sufficient to prove that $H^a\bigl(\Delta, H^0(U, \sC^q(\frU,\sF))\bigr)=0$ for any integer $a\neq0$, where
	\[
		 H^0(U, \sC^q(\frU,\sF)) =  \prod_{\bsalpha\in A^{q+1}}^\alt \Gamma(U,j_{\bsalpha*}j_\bsalpha^*\sF)
		  \cong\prod_{\alpha_0<\cdots<\alpha_q} \Gamma(U_\bsalpha,\sF).
	\]
	Assume that the total order on $A$ is preserved by the action of $\Delta$
	(for example, we may take the order on $\bbR$ through an embedding $\tau\colon A\hookrightarrow\bbR$
	for some $\tau\in I$). 
	Let $B$ be the subset of $A^{q+1}$ consisting of elements $\bsalpha = (\alpha_0,\ldots,\alpha_q)$
	such that $\alpha_0<\cdots<\alpha_q$.
	Then action of $\Delta$ on $B$ is free.
	We denote by $B_0$ a subset of $B$ representing the set 
	$\Delta\backslash B$,
	so that any $\bsalpha\in B$ may be written uniquely as $\bsalpha=\varepsilon\bsalpha_0$ for some 
	$\varepsilon\in\Delta$ and $\bsalpha_0\in B_0$.  We let
	\[
		M\coloneqq\prod_{\bsalpha\in B_0} \Gamma(U_{\bsalpha}, \sF),
	\]
	and we let
	$
		\Hom_\bbZ(\bbZ[\Delta], M)
	$
	be the coinduced module of $M$,
	with the action of $\Delta$ given for any $\varphi\in\Hom_\bbZ(\bbZ[\Delta], M)$
	by $\varepsilon\varphi(u)=\varphi(u\varepsilon)$ for any $u\in\bbZ[\Delta]$ and $\varepsilon\in\Delta$.
	Then we have a $\bbZ[\Delta]$-linear isomorphism
	\begin{equation}\label{eq: isomorphism vulcan}
		H^0(U, \sC^q(\frU,\sF)) \xrightarrow\cong \Hom_\bbZ(\bbZ[\Delta], M)
	\end{equation}
	given by mapping any $(s_\bsalpha)\in H^0(U, \sC^q(A,\sF))$ to the $\bbZ$-linear homomorphism
	\[
		\varphi_{(s_\bsalpha)}(\delta)\coloneqq\Bigl(\iota_\delta\bigl([\delta]^*s_{\delta^{-1}\bsalpha_0}\bigr)\Bigr)\in M
	\]
	for any $\delta\in\Delta$.
	The compatibility of \eqref{eq: isomorphism vulcan} with the action of $\Delta$ is seen as follows.
	By definition, the action of $\varepsilon\in\Delta$ on $(s_\bsalpha)\in H^0(U,\sC^q(A,\sF))$ is given by
	$\varepsilon\bigl((s_\bsalpha)\bigr) 
	=\bigl(\iota_\varepsilon([\varepsilon]^* s_{\varepsilon^{-1}\bsalpha})\bigr)$.
	Hence noting that 
	\[
		\iota_\delta\circ[\delta]^*\iota_\varepsilon= \iota_{\delta\varepsilon}\colon\Gamma(U_\bsalpha,[\delta\varepsilon]^*\sF)
		\rightarrow\Gamma(U_\bsalpha,\sF)
	\]
	and
	$
		[\delta]^* \circ \iota_\varepsilon= [\delta]^*\iota_\varepsilon \circ [\delta]^*\colon\Gamma(U_\bsalpha,[\varepsilon]^*\sF)
		\rightarrow\Gamma(U_\bsalpha,[\delta]^*\sF)
	$
	for any $\delta\in\Delta$, we have
	\begin{align*}
			\varphi_{\varepsilon(s_\bsalpha)}(\delta)&=
			\Bigl(\iota_\delta\bigl([\delta]^*(\iota_\varepsilon([\varepsilon]^*s_{\varepsilon^{-1}\delta^{-1}\bsalpha_0}))\bigr)\Bigr)
			=\Bigl((\iota_\delta\circ[\delta]^*\iota_\varepsilon)\bigl([\delta\varepsilon]^*s_{\varepsilon^{-1}\delta^{-1}\bsalpha_0}\bigr)\Bigr)\\
			&=\bigl(\iota_{\delta\varepsilon}([\delta\varepsilon]^*s_{\varepsilon^{-1}\delta^{-1}\bsalpha_0})\bigr) 
			=\varphi_{(s_\bsalpha)}(\delta\varepsilon)
	\end{align*}
	as desired.  The fact that \eqref{eq: isomorphism vulcan} is an isomorphism
	follows from the fact that $B_0$ is a representative of $\Delta\backslash B$. 
	By \eqref{eq: isomorphism vulcan} and Shapiro's lemma, we have $H^a(\Delta, H^0(U,\sC^q(A,\sF)))
	\cong H^a(\{1\},M)=0$ for $a\neq0$ as desired.
\end{proof}

The following Lemma \ref{lem: 3.5} was used in the proof of Proposition \ref{proposition: acyclic}.

\begin{lemma}\label{lem: 3.5}
	Let $I$ be a scheme and let $(\sF_\lambda)_{\lambda\in\Lambda}$ be a family of quasi-coherent sheaves on $I$.
	Then for any integer $m\geq0$, we have
	\[
		H^m\biggl(I, \prod_{\lambda\in\Lambda}\sF_\lambda\biggr) \cong\prod_{\lambda\in\Lambda}H^m(I, \sF_\lambda).
	\]
\end{lemma}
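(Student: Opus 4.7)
The plan is to deduce the lemma formally from three basic facts about the abelian category of sheaves on $I$, without using quasi-coherence at all (which the conclusion does not actually require). For each $\lambda \in \Lambda$, I choose an injective resolution $\sF_\lambda \to \sI_\lambda^\bullet$ in the category $\mathrm{Mod}(\sO_I)$ of $\sO_I$-modules, and I will show that $\prod_{\lambda}\sI_\lambda^\bullet$ is an injective resolution of $\prod_{\lambda}\sF_\lambda$, from which the claim is immediate.

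The first ingredient is that the product functor $\prod_{\lambda}\colon \mathrm{Mod}(\sO_I)^{\Lambda} \to \mathrm{Mod}(\sO_I)$ is exact. This holds because products of sheaves are computed sectionwise (the presheaf product of a family of sheaves is already a sheaf, since the sheaf axiom involves only equalizers, which commute with products), and arbitrary products are exact in the category of abelian groups. Consequently the resolution $\prod_{\lambda}\sF_\lambda \to \prod_{\lambda}\sI_\lambda^\bullet$ remains exact. The second ingredient is that each $\prod_{\lambda}\sI_\lambda^q$ is injective: for any $\sO_I$-module $\sA$ we have $\Hom(\sA, \prod_{\lambda}\sI_\lambda^q) = \prod_{\lambda}\Hom(\sA, \sI_\lambda^q)$, and a product of exact functors landing in $\mathrm{Ab}$ is exact by exactness of products in $\mathrm{Ab}$.

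Combining these, $\prod_{\lambda}\sI_\lambda^\bullet$ is an injective resolution of $\prod_{\lambda}\sF_\lambda$. Applying $\Gamma(I,-)$ and noting once more that sections over $I$ commute with products of sheaves, the complex $\Gamma(I,\prod_{\lambda}\sI_\lambda^\bullet)$ coincides with $\prod_{\lambda}\Gamma(I,\sI_\lambda^\bullet)$. Using exactness of products in $\mathrm{Ab}$ a third time (so that kernels and cokernels, hence cohomology, commute with products of complexes), we obtain
\[
    H^m\Bigl(I,\prod_{\lambda}\sF_\lambda\Bigr)\cong H^m\Bigl(\prod_{\lambda}\Gamma(I,\sI_\lambda^\bullet)\Bigr)\cong \prod_{\lambda}H^m(I,\sF_\lambda),
\]
which is the desired isomorphism.

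There is essentially no technical obstacle; the lemma is a formal reflection of the AB4* property of $\mathrm{Mod}(\sO_I)$. The only step worth flagging is the verification that products of injectives are injective, which legitimizes the construction of a product injective resolution and should therefore be stated explicitly. It is also worth noting that $\prod_{\lambda}\sF_\lambda$ need not be quasi-coherent even when each $\sF_\lambda$ is, but this plays no role since the whole argument takes place in $\mathrm{Mod}(\sO_I)$ where the derived functors of $\Gamma(I,-)$ are computed.
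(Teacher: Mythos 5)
Your claim that the product functor $\prod_{\lambda}\colon\mathrm{Mod}(\sO_I)^{\Lambda}\to\mathrm{Mod}(\sO_I)$ is exact is the gap. The reasoning offered is that products of sheaves are computed sectionwise and that products in $\mathrm{Ab}$ are exact, but this conflates two different notions: exactness of a sequence of sheaves is a condition on \emph{stalks}, not on sections. A surjection of sheaves is surjective on stalks but generally not surjective on sections over every open $V$, so what your argument actually shows is that the sectionwise (presheaf) product is exact — which is weaker than what you need. Since the stalk of a product at $x$ is a filtered colimit over $V\ni x$ of products of sections, and filtered colimits do not commute with infinite products, exactness really can be lost; the category $\mathrm{Mod}(\sO_I)$ is not AB4* in general. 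Concretely, for each $V$ the complex $\Gamma(V,\sI_\lambda^\bullet)$ has cohomology $H^i(V,\sF_\lambda)$ in degree $i$, which need not vanish for $i>0$, so $\prod_\lambda\Gamma(V,\sI_\lambda^\bullet)$ need not be exact either, and you cannot conclude that $\prod_\lambda\sI_\lambda^\bullet$ is a resolution of $\prod_\lambda\sF_\lambda$.

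This is exactly where the quasi-coherence hypothesis, which you explicitly set aside as unnecessary, enters the paper's proof. Because each $\sF_\lambda$ is quasi-coherent, affine vanishing gives $H^i(V,\sF_\lambda)=0$ for every affine open $V$ and every $i>0$, so $0\to\Gamma(V,\sF_\lambda)\to\Gamma(V,\sI_\lambda^\bullet)$ is already exact whenever $V$ is affine; taking the product preserves this (here exactness of products in $\mathrm{Ab}$ is used legitimately), and since affine opens are cofinal among open neighborhoods of any point $x$, passing to the filtered colimit over $V\ni x$ yields exactness of $\prod_\lambda\sI_\lambda^\bullet$ on stalks. Your other steps — products of injectives are injective, $\Gamma(I,-)$ commutes with products of sheaves, and cohomology of complexes commutes with products in $\mathrm{Ab}$ — are correct and coincide with what the paper does; the proposal only needs to be repaired at the exactness step, and that repair is precisely the use of quasi-coherence.
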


\begin{proof}
	Take an injective resolution $0\rightarrow\sF_\lambda\rightarrow I^\bullet_\lambda$ for each $\lambda\in\Lambda$.
	We will prove that $0\rightarrow\prod_{\lambda\in\Lambda}\sF_\lambda\rightarrow \prod_{\lambda\in\Lambda}I^\bullet_\lambda$
	is an injective resolution.
	Since the product of injective objects is injective, it is sufficient to prove that $0 \rightarrow 
	\prod_{\lambda\in\Lambda}\sF_\lambda\rightarrow \prod_{\lambda\in\Lambda}I^\bullet_\lambda$ is exact.

	For any affine open set $V$ of $I$, by affine vanishing, the global section
	$0 \rightarrow \sF_\lambda(V) \rightarrow I^\bullet_\lambda(V)$ is exact, hence the product 
	\begin{equation}\label{eq: orange}
		0 \rightarrow\prod_{\lambda\in\Lambda}\sF_\lambda(V) \rightarrow
		\prod_{\lambda\in\Lambda} I^\bullet_\lambda(V)
	\end{equation}
	is also exact.
	For any $x\in I$, if we take the direct limit of \eqref{eq: orange} with respect to open affine neighborhoods of $x$,
	then we obtain the exact sequence
	\[
	0 \rightarrow 
		\Biggl(\prod_{\lambda\in\Lambda}\sF_\lambda\Biggr)_x\rightarrow
		\Biggl( \prod_{\lambda\in\Lambda}I^\bullet_\lambda\Biggr)_x.
	\]
	This shows that 
	 $0 \rightarrow 
	\prod_{\lambda\in\Lambda}\sF_\lambda\rightarrow \prod_{\lambda\in\Lambda}I^\bullet_\lambda$ is exact
	as desired.
\end{proof}

Proposition \ref{proposition: acyclic} gives the following Corollary.

\begin{corollary}\label{cor: description}
	We let $C^\bullet(\frU/\Delta,\sF)\coloneqq\Gamma(U/\Delta,\sC^\bullet(\frU,\sF))$.
	Then for any integer $m\geq0$, the equivariant cohomology $H^m(U/\Delta,\sF)$
	is given as 
	\[
		H^m(U/\Delta,\sF) = H^m(C^\bullet(\frU/\Delta,\sF)).
	\]	
\end{corollary}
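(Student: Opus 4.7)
The plan is to apply the standard homological-algebra principle that a right derived functor may be computed on any resolution by acyclic objects. All the substantive work has already been carried out; it remains only to assemble the pieces.

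First, I would observe that the exact sequence
\[
0 \to \sF \to \sC^0(\frU,\sF) \to \sC^1(\frU,\sF) \to \cdots
\]
introduced in the paragraph following \eqref{eq: Cech differential} is in fact a resolution of $\sF$ inside the abelian category of $\Delta$-equivariant quasi-coherent $\sO_U$-modules. Indeed, the inclusion $\sF \hookrightarrow \sC^0(\frU,\sF)$ and every differential $d^q$ intertwine the equivariant structures $\iota_\varepsilon$, as recorded immediately before Proposition \ref{proposition: acyclic}, so the complex lives in the equivariant category; exactness of the underlying complex of sheaves has already been established.

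Next, Proposition \ref{proposition: acyclic} states that each $\sC^q(\frU,\sF)$ is $\Gamma(U/\Delta,-)$-acyclic. Applying the standard acyclic-resolution lemma (obtainable by dimension shifting, or by the first-quadrant spectral sequence associated to a Cartan--Eilenberg resolution of $\sC^\bullet(\frU,\sF)$) then gives a canonical identification
\[
H^m(U/\Delta,\sF) \cong H^m\bigl(\Gamma(U/\Delta,\sC^\bullet(\frU,\sF))\bigr) = H^m\bigl(C^\bullet(\frU/\Delta,\sF)\bigr),
\]
which is exactly the statement of the Corollary.

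I do not anticipate any real obstacle here. The substantive vanishing, namely the affine vanishing from Lemma \ref{lem: 3.5} together with the group-cohomological vanishing via the coinduced-module identification \eqref{eq: isomorphism vulcan} and Shapiro's lemma, has already been performed inside the proof of Proposition \ref{proposition: acyclic}. The only implicit assumption is that the category of $\Delta$-equivariant quasi-coherent $\sO_U$-modules has enough injectives so that $H^m(U/\Delta,-)$ is defined as a derived functor; this is standard, for instance by viewing equivariant sheaves as sheaves of modules over the skew group ring $\sO_U \rtimes \bbZ[\Delta]$.
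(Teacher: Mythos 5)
Your argument is correct and follows exactly the route the paper intends: the paper's proof of the Corollary is simply the remark that Proposition \ref{proposition: acyclic} implies it, and you have filled in precisely the standard acyclic-resolution argument being invoked, including the observation that the \v Cech complex is a resolution of $\sF$ in the $\Delta$-equivariant category. The brief aside about enough injectives is a reasonable sanity check but is already presupposed by the paper's definition of $H^m(U/\Delta,-)$ as a right derived functor.
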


By definition, for any integer $q\in\bbZ$, we have
\[
	C^q(\frU/\Delta,\sF)= \biggl(\prod^\alt_{\bsalpha\in A^{q+1}}\Gamma(U_\bsalpha,\sF)\biggr)^\Delta.
\]

%
%
%
\section{Shintani Generating Class}\label{section: Shintani Class}
%
%
%

We let $\bbT$ be the algebraic torus of \eqref{eq: algebraic torus}, and let $U=\bbT\setminus\{1\}$.
In this section, we will use the description of equivariant cohomology of Corollary \ref{cor: description}
to define the Shintani generating class as a class in $H^{g-1}(U/\Delta,\sO_\bbT)$.  
We will then consider the action of the differential operators $\partial_\tau$ on this class.

We first interpret the generating functions $\sG_\sigma(z)$ of Definition \ref{def: generating}
as rational functions on $\bbT$.
Let $\frD^{-1}\coloneqq\{u\in F\mid\Tr_{F/\bbQ}(u\cO_F)\subset\bbZ\}$ be the inverse different of $F$.
Then there exists an isomorphism
\begin{equation}\label{eq: uniformization}
	(F\otimes\bbC)/\frD^{-1} \xrightarrow\cong\bbT(\bbC) = \Hom_\bbZ(\cO_F,\bbC^\times),  \qquad z  \mapsto \xi_z
\end{equation}
given by mapping any $z\in F\otimes\bbC$ to the character $\xi_z(\alpha)\coloneqq e^{2\pi i \Tr(\alpha z)}$ in 
$\Hom_\bbZ(\cO_F,\bbC^\times)$.
The function $t^\alpha$ on $\bbT(\bbC)$ corresponds through  the isomorphism \eqref{eq: uniformization} to the function
$
	e^{2\pi i \Tr(\alpha z)}
$
for any $\alpha\in\cO_F$.  
Thus the following holds.

\begin{lemma}\label{lem: correspondence}
	For $\bsalpha=(\alpha_1,\ldots,\alpha_g)\in A^g$ and $\sigma\coloneqq\sigma_\bsalpha$,
	consider the rational function
	\[
		\cG_\sigma(t)\coloneqq\frac{\sum_{\alpha\in \wh P_\bsalpha\cap\cO_F}t^\alpha}{{\bigl(1-t^{\alpha_1}\bigr)\cdots
		\bigl(1-t^{\alpha_g}\bigr)}}
	\]
	on $\bbT$, where $P_\bsalpha$ is again the parallelepiped spanned by $\alpha_1,\ldots,\alpha_g$.
	Then $\cG_\sigma(t)$ corresponds to the function $\sG_{\sigma}(z)$ of 
	Definition \ref{def: generating} through the uniformization \eqref{eq: uniformization}.
	Note that by definition, if we let $B\coloneqq\bbZ[t^{\alpha}\mid\alpha\in\cO_{F+}]$,  then we have
	\begin{equation}\label{eq: B}
		\cG_\sigma(t)\in B_\bsalpha\coloneqq 
		B\Bigl[\frac{1}{1-t^{\alpha_1}},\ldots,\frac{1}{1-t^{\alpha_g}}\Bigr].
	\end{equation}
\end{lemma}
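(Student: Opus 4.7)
The lemma breaks into two assertions: the transcendental identification of $\cG_\sigma(t)$ with $\sG_\sigma(z)$ through the uniformization \eqref{eq: uniformization}, and the algebraic integrality statement $\cG_\sigma(t)\in B_\bsalpha$. Both are direct verifications, so the plan is essentially one of bookkeeping.

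For the first statement, the plan is to combine two facts already in place in the excerpt: the uniformization sends $z\in F\otimes\bbC$ to the character $\xi_z$ with $\xi_z(\alpha)=e^{2\pi i\Tr(\alpha z)}$, and under the corresponding pullback the rational function $t^\alpha$ on $\bbT$ becomes the analytic function $z\mapsto e^{2\pi i\Tr(\alpha z)}$ on $F\otimes\bbC$. Substituting $t^{\alpha}\mapsto e^{2\pi i\Tr(\alpha z)}$ term by term in the numerator $\sum_{\alpha\in\wh P_\bsalpha\cap\cO_F}t^\alpha$ and in each factor $1-t^{\alpha_i}$ of the denominator transforms $\cG_\sigma(t)$ into precisely the formula for $\sG_\sigma(z)$ of Definition \ref{def: generating}. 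No further argument is needed here.

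For the integrality claim $\cG_\sigma(t)\in B_\bsalpha$, the denominator $(1-t^{\alpha_1})\cdots(1-t^{\alpha_g})$ is invertible in $B_\bsalpha$ by the very definition \eqref{eq: B}, so the whole point reduces to showing the numerator lies in $B=\bbZ[t^\alpha\mid\alpha\in\cO_{F+}]$. The key observation is that every element of $\wh P_\bsalpha\cap\cO_F$ is totally positive: reading the definition of $\wh R$ given in Section 2 with $R=P_\bsalpha$, the ambient requirement is $u\in\bbR_+^I$, so $\wh P_\bsalpha\subset\bbR_+^I$, and via the canonical isomorphism $F\otimes\bbR\cong\bbR^I$ this is exactly the condition $\alpha\in\cO_{F+}$. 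The sum is moreover finite, since $\wh P_\bsalpha$ is bounded (it arises from the bounded parallelepiped $P_\bsalpha$ by a perturbation in the last coordinate) and $\cO_F$ is discrete in $F\otimes\bbR$. Hence the numerator is a finite $\bbZ$-linear combination of monomials $t^\alpha$ with $\alpha\in\cO_{F+}$, so it lies in $B$, proving $\cG_\sigma(t)\in B_\bsalpha$.

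There is no real obstacle here; the statement is a translation between the multiplicative coordinates on the torus $\bbT$ and the additive exponential coordinates on its universal cover, plus a sanity check on the definition of $\wh{\,\cdot\,}$. The only place where one must be slightly careful is verifying that $\wh P_\bsalpha\subset\bbR_+^I$ and not merely $\bbR_{\ge0}^I$, which is forced by the ambient condition in the definition of $\wh R$ and ensures that the exponents appearing in the numerator are genuinely in $\cO_{F+}$ rather than only in $\cO_F$.
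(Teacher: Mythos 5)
Your proposal is correct and takes essentially the same approach as the paper, which treats the lemma as an immediate consequence of the observation (stated just before the lemma) that $t^\alpha$ corresponds to $e^{2\pi i\Tr(\alpha z)}$ under the uniformization; the paper gives no explicit proof. Your only addition is spelling out the integrality claim, which the paper dismisses as holding ``by definition'': your check that $\wh P_\bsalpha\subset\bbR_+^I$ forces the exponents into $\cO_{F+}$, and that boundedness of $P_\bsalpha$ plus discreteness of $\cO_F$ gives finiteness of the numerator, is exactly the intended reasoning.
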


Again, we fix an ordering $I=\{\tau_1,\ldots,\tau_g\}$.  For any $\bsalpha=(\alpha_1,\ldots,\alpha_g)\in\cO_{F+}^g$, 
let $\bigl(\alpha_j^{\tau_i}\bigr)$ be the matrix in $M_g(\bbR)$ whose $(i,j)$-component is $\alpha_j^{\tau_i}$.
We let $\sgn(\bsalpha)\in\{0,\pm1\}$ be the signature of  $\det\bigl(\alpha_j^{\tau_i}\bigr)$.
We define the \textit{Shintani generating class} $\cG$ as follows.

\begin{proposition}\label{prop: Shintani generating class}
	For any $\bsalpha=(\alpha_1,\ldots,\alpha_g)\in A^{g}$, we let
	\[
		\cG_\bsalpha\coloneqq\sgn(\bsalpha)\cG_{\sigma_\bsalpha}(t)
		\in\Gamma(U_\bsalpha,\sO_\bbT).
	\]
	Then we have
	$
		(\cG_\bsalpha) \in C^{g-1}(A,\sO_\bbT).
	$
	Moreover, $(\cG_\bsalpha)$ satisfies the cocycle condition $d^{g-1}(\cG_\bsalpha)=0$, hence defines a class
	\[
		\cG\coloneqq [\cG_\bsalpha]\in H^{g-1}(U/\Delta,\sO_\bbT).
	\]	
	We call this class the Shintani generating class.
\end{proposition}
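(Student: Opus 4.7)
My approach is to verify four conditions separately: that each $\cG_\bsalpha$ is a regular section of $\sO_\bbT$ on $U_\bsalpha$; that the family $(\cG_\bsalpha)$ is alternating and vanishes on degenerate tuples; that it is $\Delta$-invariant; and that it satisfies the cocycle condition. The first three are direct checks, while the fourth carries the real content.

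Regularity is immediate from Lemma \ref{lem: correspondence}, since $\cG_{\sigma_\bsalpha}(t) \in B_\bsalpha$ has only the factors $1-t^{\alpha_i}$ as denominators, and each of these is invertible precisely on $U_\bsalpha$. For the alternating property, the cone $\sigma_\bsalpha$, the parallelepiped $P_\bsalpha$, and hence the rational function $\cG_{\sigma_\bsalpha}(t)$ depend only on the unordered set $\{\alpha_1, \ldots, \alpha_g\}$, whereas $\sgn(\bsalpha)$ is the signature of $\det(\alpha_j^{\tau_i})$ and therefore transforms by $\sgn(\rho)$ under permutations in $\frS_g$ and vanishes whenever two components coincide.

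For $\Delta$-invariance, the trivial equivariant structure on $\sO_\bbT$ (the case $\bsk = 0$ of Definition \ref{def: twist}, i.e.\ $\iota_\varepsilon = \id$) reduces the invariance coming from Corollary \ref{cor: description} to the term-by-term equality $[\varepsilon]^*\cG_{\varepsilon^{-1}\bsalpha} = \cG_\bsalpha$ for every $\varepsilon \in \Delta$. Since each conjugate $\varepsilon^\tau$ is positive, multiplication by $\varepsilon^{-1}$ rescales the rows of $(\alpha_j^{\tau_i})$ by positive factors and hence preserves the signature, giving $\sgn(\varepsilon^{-1}\bsalpha) = \sgn(\bsalpha)$. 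A direct check using the definition of the hat operation yields $\wh{P}_{\varepsilon^{-1}\bsalpha} = \varepsilon^{-1}\wh{P}_\bsalpha$, and combined with the change of variables $\alpha \mapsto \varepsilon\alpha$ and the identity $[\varepsilon]^* t^\alpha = t^{\varepsilon\alpha}$ this identifies numerators and denominators.

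The main obstacle is the cocycle condition: writing $\sigma_j \coloneqq \sigma_{(\alpha_0, \ldots, \check\alpha_j, \ldots, \alpha_g)}$, we must prove for each $(\alpha_0, \ldots, \alpha_g) \in A^{g+1}$ that
\[
\sum_{j=0}^{g} (-1)^j \cG_{(\alpha_0, \ldots, \check\alpha_j, \ldots, \alpha_g)} = 0
\]
as a section of $\sO_\bbT$ on $U_{(\alpha_0, \ldots, \alpha_g)}$. The essential geometric input is that these $g+1$ elements lie in the rank-$g$ lattice $\cO_F$ and hence satisfy a nontrivial $\bbR$-linear relation $\sum_j c_j \alpha_j = 0$. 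My plan is to expand each $\cG_{\sigma_j}(t)$ formally as the Laurent series $\sum_{\alpha \in \wh{\sigma_j}\cap\cO_F} t^\alpha$ valid in an appropriate analytic chamber of $\bbT(\bbC)$, reducing the claim to the pointwise combinatorial identity
\[
\sum_{j=0}^{g} (-1)^j \sgn(\alpha_0, \ldots, \check\alpha_j, \ldots, \alpha_g)\, \bone_{\wh{\sigma_j}}(\alpha) = 0, \qquad \alpha \in \bbR^I.
\]
This is a Radon-type cancellation for half-open simplicial cones: partitioning $\{0, \ldots, g\}$ into indices with $c_j > 0$ and $c_j < 0$, the two resulting collections of half-open cones tile the same subset of $\bbR^I_+$, and the signature factor arranges for pairwise cancellation. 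Equivalently, one may view $\sigma \mapsto \cG_\sigma$ as a valuation on rational polyhedral cones that annihilates any cone containing a line, and observe that the signed virtual cone $\sum_j (-1)^j[\sigma_j]$ is of this form (compare the combinatorial arguments in Ishida \cite{Ish92} and Yamada \cite{Yam10}). Once this identity is established, Corollary \ref{cor: description} immediately delivers the class $\cG = [\cG_\bsalpha] \in H^{g-1}(U/\Delta, \sO_\bbT)$.
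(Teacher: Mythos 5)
Your proof is correct and follows essentially the same route as the paper's: you verify regularity, the alternating property, and $\Delta$-invariance directly, then reduce the cocycle condition to the signed characteristic-function identity $\sum_j(-1)^j\sgn(\alpha_0,\ldots,\check\alpha_j,\ldots,\alpha_g)\bone_{\wh\sigma_j}\equiv 0$ via series expansion of the $\cG_{\sigma_j}(t)$, which is exactly the identity the paper invokes from \cite{Yam10}*{Proposition 6.2}. The only cosmetic difference is that the paper works with the injection into the formal power series ring $\bbZ\llbracket t^{\alpha_1},\ldots,t^{\alpha_g}\rrbracket$ where you expand on a common analytic chamber of $\bbT(\bbC)$; both are valid, with the formal completion being slightly cleaner since it avoids exhibiting a nonempty common domain of convergence for the $g+1$ expansions.
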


\begin{proof}
	By construction, $(\cG_\bsalpha)$ defines 
	an element in $\Gamma(U, \sC^{g-1}(\frU,\sO_\bbT))=\prod^\alt_{\bsalpha\in A^g} \Gamma(U_\bsalpha,\sO_\bbT)$.
	Since $[\varepsilon]^*\cG_\bsalpha=\cG_{\varepsilon\bsalpha}$ for any $\varepsilon\in\Delta$,
	we have
	\[
		(\cG_\bsalpha)\in \Gamma\bigl(U, \sC^{g-1}(\frU,\sO_\bbT)\bigr)^{\Delta}= C^{g-1}(\frU/\Delta,\sO_\bbT).
	\]
	To prove the cocycle condition $d^{g-1}(\cG_\bsalpha)=0$, it is sufficient to check that
	\begin{equation}\label{eq: cocycle condition}
		\sum_{j=0}^g(-1)^j\cG_{(\alpha_0,\ldots,\check\alpha_j,\ldots\alpha_g)}=0
	\end{equation}
	for any $\alpha_0,\ldots,\alpha_g\in A$.
	By definition, the rational function $\cG_{\sigma_\bsalpha}(t)$ maps to the formal power series
	\[
		\cG_{\sigma_\bsalpha}(t) =\sum_{\alpha\in\wh\sigma_\bsalpha\cap\cO_F}t^\alpha
	\]
	by taking the formal completion 
	$B_\bsalpha\hookrightarrow\bbZ\llbracket t^{\alpha_1},\ldots,t^{\alpha_g}\rrbracket$, where $B_\bsalpha$ is the ring defined in \eqref{eq: B}.
	Since the map taking the formal completion is injective,
	it is sufficient to check \eqref{eq: cocycle condition} for the associated formal power series.
	By \cite{Yam10}*{Proposition 6.2}, we have
	\[
		\sum_{j=0}^g(-1)^j\sgn(\alpha_0,\ldots,\check\alpha_j,\ldots,\alpha_g)
		\boldsymbol{1}_{\wh\sigma_{(\alpha_0,\ldots,\check\alpha_j,\ldots\alpha_g)}}\equiv0
	\]
	as a function on $\bbR_+^I$, where $\boldsymbol{1}_R$ is the characteristic function of $R\subset\bbR^I_+$ satisfying
	$\boldsymbol{1}_R(x)=1$ if $x\in R$ and $\boldsymbol{1}_R(x)=0$ if $x\not\in R$.
	Our assertion now follows by examining the formal power series expansion of 
	$\cG_{(\alpha_0,\ldots,\check\alpha_j,\ldots,\alpha_g)}$.
\end{proof}

We will next define differential operators $\partial_\tau$ for $\tau\in I$ on equivariant cohomology.
Since $t^\alpha = e^{2\pi i \Tr(\alpha z)}$ through \eqref{eq: uniformization} for any $\alpha\in\cO_F$, we have
\begin{equation}\label{eq: relation}
	\frac{dt^\alpha}{t^\alpha} = \sum_{\tau\in I} 2\pi i \alpha^\tau dz_\tau.
\end{equation}
Let $\alpha_1,\ldots,\alpha_g$ be a basis of $\cO_F$.
For any $\tau\in I$, we let $\partial_\tau $ be the differential operator
\[
	\partial_\tau \coloneqq\sum_{j=1}^g  \alpha_j^\tau  t^{\alpha_j}\frac{\partial}{\partial t^{\alpha_j}}.
\]
By \eqref{eq: relation}, we see that $\partial_\tau $ corresponds to the differential operator
$\frac{1}{2\pi i}\frac{\partial}{\partial z_\tau}$
through the uniformization \eqref{eq: uniformization}, and hence is independent of the choice of the basis $\alpha_1,\ldots,\alpha_g$.
By Theorem \ref{theorem: Shintani} and Lemma \ref{lem: correspondence}, we have the following result.

\begin{proposition}\label{prop: Shintani}
	Let $\bsalpha=(\alpha_1,\ldots,\alpha_g)\in A^g$ and $\sigma=\sigma_\bsalpha$.
	For any $\bsk=(k_\tau)\in\bbN^I$ and $\partial^\bsk\coloneqq\prod_{\tau\in I}\partial_\tau^{k_\tau}$, 
	we have
	\[
		\partial^\bsk\cG_\sigma(\xi) = \zeta_\sigma(\xi,-\bsk)
	\]
	for any torsion point $\xi\in U_{\bsalpha}$.
\end{proposition}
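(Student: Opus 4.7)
The plan is to pull the entire statement back along the complex uniformization
\[
    (F\otimes\bbC)/\frD^{-1}\xrightarrow\cong\bbT(\bbC),\qquad z\mapsto\xi_z,
\]
and deduce the result from Theorem \ref{theorem: Shintani} via Lemma \ref{lem: correspondence}. The essential point is that every object appearing in the proposition (the rational function, the differential operators, the torsion point, and the pole condition) has a direct analytic counterpart on $F\otimes\bbC$ already used in Theorem \ref{theorem: Shintani}.

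First I would verify that any torsion point $\xi\in\bbT(\bbC)$ lifts to an element $u\otimes 1$ for some $u\in F$: torsion in $(F\otimes\bbC)/\frD^{-1}$ is rational because $\frD^{-1}$ is a full lattice in $F\otimes\bbR$, so a class whose $N$-th multiple lies in $\frD^{-1}\subset F$ is itself represented by an element of $F$. Writing $\xi=\xi_u$ with $u\in F$, the condition $\xi\in U_\bsalpha=U_{\alpha_1}\cap\cdots\cap U_{\alpha_g}$ unravels as $\xi_u(\alpha_j)=e^{2\pi i\Tr(\alpha_j u)}\neq 1$ for every $j$, which is precisely the non-pole hypothesis required by Theorem \ref{theorem: Shintani}.

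Next, by Lemma \ref{lem: correspondence}, the rational function $\cG_\sigma(t)$ on $\bbT$ corresponds under the uniformization to the meromorphic function $\sG_\sigma(z)$ on $F\otimes\bbC$ from Definition \ref{def: generating}. Using the identity \eqref{eq: relation}, the algebraic differential operator $\partial_\tau=\sum_{j} \alpha_j^\tau t^{\alpha_j}\partial/\partial t^{\alpha_j}$ on $\bbT$ corresponds to the analytic operator $\frac{1}{2\pi i}\partial/\partial z_\tau$ on $F\otimes\bbC$, as is already noted in the text preceding the proposition; in particular this identification is independent of the chosen basis. Hence $\partial^\bsk\cG_\sigma$ pulls back to $\partial^\bsk\sG_\sigma$, and specializing at $\xi=\xi_u$ corresponds to specializing at $z=u\otimes 1$. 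Applying Theorem \ref{theorem: Shintani} yields
\[
    \partial^\bsk\cG_\sigma(\xi) \;=\; \partial^\bsk\sG_\sigma(z)\big|_{z=u\otimes 1} \;=\; \zeta_\sigma(\xi_u,-\bsk) \;=\; \zeta_\sigma(\xi,-\bsk),
\]
which is the desired equality.

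There is no serious obstacle: the proposition is essentially a dictionary translation between the algebraic $\bbT$-picture and the analytic $(F\otimes\bbC)/\frD^{-1}$-picture. The only point requiring a moment of attention is the rationality of torsion in $(F\otimes\bbC)/\frD^{-1}$, which guarantees that Theorem \ref{theorem: Shintani}, stated for $u\in F$, can indeed be invoked at an arbitrary complex torsion point $\xi$ of $U_\bsalpha$.
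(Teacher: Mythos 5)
Your proof is correct and matches the paper's approach exactly: the paper justifies this proposition with the single preceding sentence ``By Theorem~\ref{theorem: Shintani} and Lemma~\ref{lem: correspondence}, we have the following result,'' and your argument simply spells out that dictionary translation, including the (correct) observations that a torsion point of $\bbT(\bbC)$ lifts to some $u\in F$ under the uniformization \eqref{eq: uniformization} and that $\xi\in U_\bsalpha$ is precisely the non-pole hypothesis $\xi_u(\alpha_j)\neq1$ of Theorem~\ref{theorem: Shintani}.
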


The differential operator $\partial_\tau$ gives a morphism of abelian sheaves
\[
	\partial_\tau\colon\sO_{\bbT_{F^\tau}}(\bsk)\rightarrow\sO_{\bbT_{F^\tau}}(\bsk-1_\tau)
\]
compatible with the action of $\Delta$ for any $\bsk\in\bbZ^I$,
where $\bbT_{F^\tau}\coloneqq\bbT\otimes F^\tau$
is the base change of $\bbT$ to $F^\tau$.
This induces a homomorphism
\[
	\partial_\tau\colon H^m(U_{F^\tau}/\Delta,\sO_{\bbT_{F^\tau}}(\bsk))\rightarrow 
	H^m(U_{F^\tau}/\Delta,\sO_{\bbT_{F^\tau}}(\bsk-1_\tau))
\]
on equivariant cohomology.

\begin{lemma}\label{lem: differential}
	Let $\wt F$ be the composite of $F^\tau$ for all $\tau\in I$.
	The operators $\partial_\tau$ for $\tau\in I$, considered over $\wt F$, are commutative with each other.
	Moreover, the composite 
	\[
		\partial\coloneqq\prod_{\tau\in I}\partial_\tau\colon 
		\sO_{\bbT_{\wt F}} \rightarrow \sO_{\bbT_{\wt F}}(1,\ldots,1)=\sO_{\bbT_{\wt F}}
	\]
	is defined over $\bbQ$,
	that is, it is a base change to $\wt F$ of a morphism of abelian sheaves
	$
		\partial\colon \sO_\bbT\rightarrow\sO_\bbT.
	$
	In particular, $\partial$ induces a homomorphism 
	\begin{equation}\label{eq: differential}
		\partial\colon H^m(U/\Delta,\sO_\bbT) \rightarrow H^m(U/\Delta,\sO_\bbT).
	\end{equation}
\end{lemma}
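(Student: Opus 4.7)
The plan is to reduce the entire statement to a direct computation on the standard basis $\{t^\alpha\mid\alpha\in\cO_F\}$ of $\Gamma(\bbT,\sO_\bbT)$. First I would verify that
\[
\partial_\tau(t^\alpha)=\alpha^\tau\,t^\alpha
\]
by choosing a $\bbZ$-basis $\alpha_1,\ldots,\alpha_g$ of $\cO_F$, writing $\alpha=\sum_j c_j\alpha_j$, and applying the Leibniz rule to $t^\alpha=\prod_j(t^{\alpha_j})^{c_j}$; the defining formula for $\partial_\tau$ makes this immediate. Consequently each $\partial_\tau$ acts diagonally on the monomial basis, and commutativity of the family $\{\partial_\tau\}_{\tau\in I}$ over $\wt F$ falls out for free.

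Given the formula $\partial_\tau(t^\alpha)=\alpha^\tau t^\alpha$, the composite $\partial=\prod_{\tau\in I}\partial_\tau$ satisfies $\partial(t^\alpha)=N(\alpha)t^\alpha$, and since $N(\alpha)\in\bbZ$ for every $\alpha\in\cO_F$, this visibly descends to an endomorphism of $\sO_\bbT$ already defined over $\bbZ$, a fortiori over $\bbQ$. This settles the descent claim.

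Next I would verify $\Delta$-equivariance. A direct computation gives $\partial_\tau\circ[\varepsilon]^*=\varepsilon^\tau\,[\varepsilon]^*\circ\partial_\tau$, so each $\partial_\tau$ fails to be equivariant as a self-map of $\sO_\bbT$, but is equivariant when regarded as a map $\sO_\bbT(\bsk)\to\sO_\bbT(\bsk-1_\tau)$: the defining twist factor $\varepsilon^\tau$ exactly absorbs the discrepancy. Composing over all $\tau\in I$ accumulates the total factor $\prod_\tau\varepsilon^\tau=N(\varepsilon)$, which equals $1$ on $\Delta=\cO_{F+}^\times$. Combined with the identification $\sO_\bbT(-1,\ldots,-1)=\sO_\bbT$ of $\Delta$-equivariant sheaves noted at the end of Definition \ref{def: twist}, this shows that $\partial$ is $\Delta$-equivariant as a self-map of $\sO_\bbT$, and the homomorphism \eqref{eq: differential} then follows by functoriality of $H^m(U/\Delta,-)$.

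There is no serious obstacle; the entire argument is bookkeeping. The only point requiring care is the interplay between the twist shifts $\sO_\bbT(\bsk)\mapsto\sO_\bbT(\bsk-1_\tau)$ introduced by the individual $\partial_\tau$ and the crucial identity $N(\varepsilon)=1$ for totally positive units, whose combined effect produces an \emph{untwisted} equivariant operator on $\sO_\bbT$.
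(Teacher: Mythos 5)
Your proof is correct, and the core difference from the paper lies in how you establish that $\partial$ is defined over $\bbQ$. The paper's proof is a one-liner by Galois descent: since $\Gal(\wt F/\bbQ)$ permutes the operators $\partial_\tau$, the product $\partial=\prod_\tau\partial_\tau$ is Galois-invariant and hence descends to $\bbQ$. You instead compute directly on the monomial basis, observing $\partial_\tau(t^\alpha)=\alpha^\tau t^\alpha$ and therefore $\partial(t^\alpha)=N(\alpha)\,t^\alpha$ with $N(\alpha)\in\bbZ$, giving descent already over $\bbZ$. Your version is more explicit, makes the commutativity completely transparent (each $\partial_\tau$ acts diagonally), and actually yields a slightly stronger conclusion (integrality rather than just $\bbQ$-rationality); the paper's version is shorter but relies on the reader to see why the $\partial_\tau$ are Galois-conjugate and why invariance suffices. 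Your verification of $\Delta$-equivariance of the individual $\partial_\tau$ via the identity $\partial_\tau\circ[\varepsilon]^*=\varepsilon^\tau\,[\varepsilon]^*\circ\partial_\tau$ duplicates a fact the paper asserts without proof in the sentence preceding the lemma (that $\partial_\tau\colon\sO_{\bbT_{F^\tau}}(\bsk)\to\sO_{\bbT_{F^\tau}}(\bsk-1_\tau)$ is $\Delta$-equivariant), so it is not strictly part of the lemma's proof, but including it is harmless and makes the argument self-contained. One small note: you write the target twist as $\sO_\bbT(-1,\ldots,-1)$ where the paper's statement writes $\sO_{\bbT_{\wt F}}(1,\ldots,1)$; given the convention $\partial_\tau\colon\sO_\bbT(\bsk)\to\sO_\bbT(\bsk-1_\tau)$, your sign is the consistent one, but the point is moot since $\sO_\bbT(k,\ldots,k)=\sO_\bbT$ as $\Delta$-equivariant sheaves for any integer $k$ by $N(\varepsilon)=1$.
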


\begin{proof}
	The commutativity is clear from the definition.
	Since the group $\Gal(\wt F/\bbQ)$ permutes the operators $\partial_\tau$,
	the operator $\partial$ is invariant under this action.  This gives our assertion.
\end{proof}

Our main result, which we prove in \S \ref{section: specialization}, concerns the specialization of the classes
\begin{equation}\label{eq: main}
	\partial^k\cG  \in H^{g-1}\bigl(U/\Delta,\sO_\bbT\bigr)
\end{equation}
for $k\in\bbN$ at nontrivial torsion points of $\bbT$.

%
%
%
\section{Specialization to Torsion Points}\label{section: specialization}
%
%
%

For any nontrivial torsion point $\xi$ of $\bbT$, let $\Delta_\xi\subset\Delta$ be the isotropic subgroup
of $\xi$.  Then we may view $\xi\coloneqq\Spec\bbQ(\xi)$ as a $\Delta_\xi$-scheme with a trivial action of $\Delta_\xi$.
Then the natural inclusion $\xi\rightarrow U$ for $U\coloneqq\bbT\setminus\{1\}$
is compatible with the inclusion $\Delta_\xi\subset\Delta$,
hence the pullback \eqref{eq: pullback} induces the specialization map
\[
	\xi^*\colon H^m(U/\Delta,\sO_\bbT)\rightarrow H^m(\xi/\Delta_\xi,\sO_\xi).
\]
The purpose of this section is to prove our main theorem, given as follows.

\begin{theorem}\label{theorem: main}
	Let $\xi$ be a nontrivial torsion point of $\bbT$, and let $k$ be an integer $\geq0$.
	If we let $\cG$ be the Shintani generating class defined in Proposition \ref{prop: Shintani generating class},
	and if we let
	$\partial^k\cG(\xi)\in H^{g-1}(\xi/\Delta_\xi,\sO_\xi)$ be image 
	by the specialization map $\xi^*$ of the class $\partial^k\cG$
	defined in \eqref{eq: main}, then we have
	\[
		\partial^k\cG(\xi) = \cL(\xi\Delta,-k)
	\]
	through the isomorphism $H^{g-1}(\xi/\Delta_\xi,\sO_\xi)\cong\bbQ(\xi)$ given in Proposition \ref{prop: ecc} below.
\end{theorem}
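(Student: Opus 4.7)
The plan is to reduce the identity to the non-canonical Shintani decomposition formula \eqref{eq: Shintani} combined with the classical Shintani generating-function identity (Proposition \ref{prop: Shintani}), packaged through the equivariant \v Cech description of the Shintani class given in Corollary \ref{cor: description} and the identification provided by Proposition \ref{prop: ecc}.

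First, I would identify the target. Since $\xi$ is a single point on which $\Delta_\xi$ acts trivially, the equivariant cohomology $H^{g-1}(\xi/\Delta_\xi, \sO_\xi)$ reduces to the group cohomology $H^{g-1}(\Delta_\xi, \bbQ(\xi))$. The torsion-free part of $\Delta_\xi$ is a free abelian group of rank $g-1$ by Dirichlet's unit theorem applied to the totally positive units, and the finite torsion subgroup contributes nothing in characteristic zero, so this cohomology group is one-dimensional over $\bbQ(\xi)$. Proposition \ref{prop: ecc} fixes a canonical trivialization, corresponding to a natural orientation on $\Delta_\xi \otimes \bbR$ induced from the ordering of $\bbR^I$.

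Second, I would choose a Shintani decomposition $\Phi$ adapted to $\xi$: every primitive generator $\alpha$ of every top-dimensional cone in $\Phi_g$ should satisfy $\xi(\alpha) \neq 1$. Such a $\Phi$ is obtained from any given Shintani decomposition by refining along the finite collection of rational subspaces cut out by $\xi(\alpha) = 1$, preserving both the simplicial structure and the $\Delta$-action. For this $\Phi$ we have $\xi \in U_\bsalpha$ for every generator $\bsalpha$ of every cone $\sigma \in \Phi_g$, so each $\cG_\bsalpha$ is regular at $\xi$. Represent the class $\cG$ by the cocycle $\cG_\bsalpha = \sgn(\bsalpha)\cG_{\sigma_\bsalpha}(t)$ in $C^{g-1}(\frU/\Delta, \sO_\bbT)$ from Proposition \ref{prop: Shintani generating class}. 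Under the pullback to $\xi$ and the identification of Proposition \ref{prop: ecc}, its image is the sum of the specializations $\cG_\sigma(\xi)$ over a system of representatives of the finite quotient $\Delta_\xi \backslash \Phi_g$, with the signs $\sgn(\bsalpha)$ absorbed into the chosen orientation. Since $\partial$ commutes with the equivariant pullback and equals $\partial^{(1,\ldots,1)}$ on each $\cG_\sigma$, Proposition \ref{prop: Shintani} applied $k$ times yields
\[
    \partial^k \cG(\xi) \;=\; \sum_{\sigma \in \Delta_\xi \backslash \Phi_g} \partial^{(k,\ldots,k)} \cG_\sigma(\xi) \;=\; \sum_{\sigma \in \Delta_\xi \backslash \Phi_g} \zeta_\sigma\bigl(\xi, -(k,\ldots,k)\bigr),
\]
and by \eqref{eq: diagonal} together with \eqref{eq: Shintani} the right-hand side equals $\cL(\xi\Delta, -k)$.

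The main obstacle lies in the third step: verifying that the specialization of the \v Cech cocycle $(\cG_\bsalpha)$ pairs correctly with the canonical top-degree generator of $H^{g-1}(\Delta_\xi, \bbQ(\xi))$ so that one recovers exactly one summand $\cG_\sigma(\xi)$ per $\Delta_\xi$-orbit of top-dimensional cones, with all signs consistent. This amounts to comparing the equivariant \v Cech complex on $U$ adapted to $\Phi$ with a Koszul-type resolution computing the group cohomology of $\Delta_\xi$, and is presumably the content of Proposition \ref{prop: ecc}; once that identification is in hand, the remainder is a direct combination of Proposition \ref{prop: Shintani} and the Shintani decomposition formula.
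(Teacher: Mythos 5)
Your proposal follows essentially the same route as the paper: represent $\partial^k\cG$ by the cocycle $(\partial^k\cG_\bsalpha)$, specialize at $\xi$, use Proposition \ref{prop: ecc} to identify the image as a sum over $\Delta_\xi\backslash\Phi_g$, apply Proposition \ref{prop: Shintani} termwise, and conclude via \eqref{eq: Shintani}. One minor inaccuracy: the adapted Shintani decomposition of Lemma \ref{lem: inclusion} is obtained by \emph{perturbing} cone generators (replacing a bad generator $\alpha$ with $\xi(\alpha)=1$ by $N\alpha+\beta$ for suitable $\beta$ with $\xi(\beta)\neq1$ and $N$ large), not by refining along the loci $\xi(\alpha)=1$ --- refinement cannot move a ray off such a locus --- but you correctly flag the need for such a decomposition and that the essential pairing computation is carried by Proposition \ref{prop: ecc}.
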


We will prove Theorem \ref{theorem: main} at the end of this section.
The specialization map can be expressed explicitly in terms of 
cocycles as follows.  
We let $V_\alpha\coloneqq U_\alpha\cap \xi$ for any $\alpha\in A$.
Then $\frV\coloneqq\{V_\alpha\}_{\alpha\in A}$ is an affine open covering of $\xi$.
For any integer $q\geq 0$ and $\bsalpha=(\alpha_0,\ldots,\alpha_{q})\in A^{q+1}$,
we let  $V_\bsalpha\coloneqq V_{\alpha_0}\cap\cdots\cap V_{\alpha_{q}}$ and
\begin{equation}\label{eq: alternating}
	C^q\bigl(\frV/\Delta_\xi,\sO_{\xi}\bigr)\coloneqq
	\biggl(\prod_{\bsalpha\in A^{q+1}}^\alt\Gamma(V_\bsalpha,\sO_{\xi})\biggr)^{\Delta_\xi}.
\end{equation}
Here, note that $\Gamma(V_\bsalpha,\sO_\xi)=\bbQ(\xi)$ if $V_\bsalpha\neq\emptyset$ and 
$\Gamma(V_\bsalpha,\sO_\xi)=\{0\}$ otherwise.
The same argument as that of Corollary \ref{cor: description} shows that
we have
\begin{equation}\label{eq: similarly}
	H^m(\xi/\Delta_\xi,\sO_\xi)\cong H^m\bigl(C^\bullet\bigl(\frV/\Delta_\xi,\sO_{\xi}\bigr)\bigr).
\end{equation}

We let $A_\xi$ be the subset of elements $\alpha\in A$ satisfying $\xi\in U_\alpha$.
This is equivalent to the condition that $\xi(\alpha)\neq 1$.
We will next prove in Lemma \ref{lemma: 5.2} 
that the cochain complex $C^\bullet\bigl(\frV/\Delta_\xi,\sO_{\xi}\bigr)$ of \eqref{eq: alternating}
is isomorphic to the dual of the chain complex 
$C_\bullet(A_\xi)$ defined as follows.
For any integer $q\geq0$, we let 
\[
	C_q(A_\xi)\coloneqq\bigoplus^\alt_{\bsalpha\in A_\xi^{q+1}}\bbZ\bsalpha
\]
be the quotient of $\bigoplus_{\bsalpha\in A_\xi^{q+1}}\bbZ\bsalpha$ by
the submodule generated by 
\[
	\{\rho(\bsalpha)-\sgn(\rho)\bsalpha\mid \bsalpha\in A_\xi^{q+1}, \rho\in\frS_{q+1}\}
	\cup \{ \bsalpha=(\alpha_0,\ldots,\alpha_q) \mid  \text{$\alpha_i=\alpha_j$ for some $i\neq j$}\}.
\]
We denote by $\langle\bsalpha\rangle$ the class represented by $\bsalpha$ in $C_q(A_\xi)$.
We see that $C_q(A_\xi)$ has a natural action of $\Delta_\xi$ and is a free $\bbZ[\Delta_\xi]$-module.
In fact, a basis of $C_q(A_\xi)$ may be constructed in a similar way to the construction of $B_0$
in the proof of Proposition \ref{proposition: acyclic}.
Then $C_\bullet(A_\xi)$ is a complex of $\bbZ[\Delta_\xi]$-modules with respect to the standard differential operator
$d_q\colon C_q(A_\xi)\rightarrow C_{q-1}(A_\xi)$  given by
\[
	d_q(\langle\alpha_0,\ldots,\alpha_{q}\rangle)\coloneqq\sum_{j=0}^{q}
	(-1)^{j}\langle\alpha_0,\ldots,\check\alpha_{j},\ldots,\alpha_{q}\rangle
\]
for any $\bsalpha=(\alpha_0,\ldots,\alpha_{q})\in A_\xi^{q+1}$.
If we let $d_0\colon C_0(A_\xi) \rightarrow \bbZ$ be the homomorphism defined
by $d_0(\langle \alpha\rangle)\coloneqq 1$ for any $\alpha\in A_\xi$, then 
$C_\bullet(A_\xi)$ is a free resolution  of $\bbZ$ with trivial $\Delta_\xi$-action.
We have the following.

\begin{lemma}\label{lemma: 5.2}
	There exists a natural isomorphism of complexes
	\[
		C^\bullet(\frV/\Delta_\xi,\sO_{\xi})\xrightarrow\cong \Hom_{\Delta_\xi}(C_\bullet(A_\xi),\bbQ(\xi)).
	\]
\end{lemma}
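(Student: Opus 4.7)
The plan is to verify the claimed identification degree by degree and then match the differentials; the content is essentially an unpacking of definitions. First, I would observe that $V_\bsalpha = \xi$ whenever every component of $\bsalpha = (\alpha_0,\ldots,\alpha_q)$ lies in $A_\xi$ (equivalently, $\xi(\alpha_i)\neq 1$ for all $i$), and $V_\bsalpha = \emptyset$ otherwise. Thus $\Gamma(V_\bsalpha,\sO_\xi)$ equals $\bbQ(\xi)$ or $0$ in these two cases respectively, so the alternating product collapses to
\[
\prod_{\bsalpha \in A^{q+1}}^{\alt} \Gamma(V_\bsalpha, \sO_\xi) \;=\; \prod_{\bsalpha \in A_\xi^{q+1}}^{\alt} \bbQ(\xi).
\]
By the universal property defining the alternating quotient $C_q(A_\xi) = \bigoplus^{\alt}_{\bsalpha\in A_\xi^{q+1}} \bbZ\bsalpha$, giving a $\bbZ$-linear map $f \colon C_q(A_\xi) \to \bbQ(\xi)$ is the same data as a collection $(f(\bsalpha))_{\bsalpha\in A_\xi^{q+1}}$ in $\bbQ(\xi)$ that is alternating under permutations of components and vanishes when two components coincide. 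This yields the natural identification
\[
\prod_{\bsalpha \in A_\xi^{q+1}}^{\alt} \bbQ(\xi) \;\xrightarrow{\cong}\; \Hom_\bbZ\bigl(C_q(A_\xi),\,\bbQ(\xi)\bigr),\qquad (s_\bsalpha) \longmapsto \bigl(\langle\bsalpha\rangle\mapsto s_\bsalpha\bigr).
\]

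Next, I would pass to $\Delta_\xi$-invariants on both sides. For $\varepsilon\in\Delta_\xi$ the action $[\varepsilon]$ fixes the point $\xi$, and the equivariant structure on the untwisted $\sO_\bbT$ is given by the canonical identification $[\varepsilon]^*\sO_\bbT = \sO_\bbT$ (this is the $\bsk=0$ case of Definition \ref{def: twist}), so the induced $\Delta_\xi$-action on $\bbQ(\xi) = \Gamma(\xi,\sO_\xi)$ is trivial. With the standard $\Delta_\xi$-action $\varepsilon\cdot\langle\bsalpha\rangle = \langle\varepsilon\bsalpha\rangle$ on $C_q(A_\xi)$, the subset $\Hom_{\Delta_\xi}(C_q(A_\xi),\bbQ(\xi))$ corresponds under the above identification exactly to tuples $(s_\bsalpha)$ with $s_{\varepsilon\bsalpha} = s_\bsalpha$ for all $\varepsilon\in\Delta_\xi$; this is precisely the $\Delta_\xi$-invariant subspace of the alternating product computed using the explicit action recorded in the proof of Proposition \ref{proposition: acyclic}. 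This gives the degree-$q$ isomorphism.

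Finally, I would check compatibility with differentials. Under the identification, each restriction map $\Gamma(V_{\alpha_0\cdots\check\alpha_j\cdots\alpha_{q+1}},\sO_\xi)\to\Gamma(V_{\alpha_0\cdots\alpha_{q+1}},\sO_\xi)$ appearing in the \v Cech differential \eqref{eq: Cech differential} is either the identity $\bbQ(\xi)\to\bbQ(\xi)$ (when both sides are nonempty, which happens precisely when $\{\alpha_0,\ldots,\alpha_{q+1}\}\subset A_\xi$) or a map between zero spaces. The dual differential on $\Hom_{\Delta_\xi}(C_\bullet(A_\xi),\bbQ(\xi))$ sends $f$ to $f\circ d_{q+1}$, whose value on $\langle\alpha_0,\ldots,\alpha_{q+1}\rangle$ is $\sum_j (-1)^j f(\langle\alpha_0,\ldots,\check\alpha_j,\ldots,\alpha_{q+1}\rangle)$, matching the \v Cech formula term by term. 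The lemma presents no substantial obstacle, being a verification; the one subtlety requiring care is confirming that the equivariant structure on $\sO_\xi$ restricted from $\sO_\bbT$ gives the trivial $\Delta_\xi$-action on $\bbQ(\xi)$, so that $\Delta_\xi$-invariants on the left-hand side match $\Delta_\xi$-equivariant maps into a trivial module on the right.
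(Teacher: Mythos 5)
Your proposal is correct and follows essentially the same route as the paper: identify $\Gamma(V_\bsalpha,\sO_\xi)$ with $\bbQ(\xi)$ or $0$ according to whether $\bsalpha\in A_\xi^{q+1}$, match the alternating product with $\Hom_\bbZ$ out of the free alternating module, pass to $\Delta_\xi$-invariants versus $\Delta_\xi$-equivariant maps, and check the differentials agree term by term. Your explicit verification that the $\Delta_\xi$-action on $\bbQ(\xi)$ is trivial (since $\varepsilon\in\Delta_\xi$ fixes the point $\xi$ and $\sO_\bbT$ carries the untwisted equivariant structure) is a worthwhile detail that the paper's terse proof leaves implicit.
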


\begin{proof}
	The natural isomorphism
	\[
		\prod_{\bsalpha\in A^{q+1}}\Gamma(V_\bsalpha,\sO_\xi)=\prod_{\bsalpha\in A^{q+1}_\xi}\bbQ(\xi)
		\cong\Hom_\bbZ\left(\bigoplus_{\bsalpha\in A_\xi^{q+1}}\bbZ\bsalpha,\bbQ(\xi)\right)
	\]
	induces an isomorphism between the submodules
	\[
		C^q(\frV/\Delta_\xi,\sO_\xi)=\left(\prod^\alt_{\bsalpha\in A^{q+1}}\Gamma(V_\bsalpha,\sO_\xi)  \right)^{\Delta_\xi}
		\subset 	\prod_{\bsalpha\in A^{q+1}}\Gamma(V_\bsalpha,\sO_\xi)
	\]
	and 
	\[
		\Hom_{\Delta_\xi}(C_q(A_\xi),\bbQ(\xi))\subset\Hom_\bbZ
		\left(\bigoplus_{\bsalpha\in A_\xi^{q+1}}\bbZ\bsalpha,\bbQ(\xi)\right).
	\]
	Moreover, this isomorphism is compatible with the differential.  
\end{proof}
We will next use a Shintani decomposition (see Definition \ref{def: Shintani}) to
construct a complex which is quasi-isomorphic to the complex $C_\bullet\bigl(A_\xi)$.

\begin{lemma}\label{lem: inclusion}
	Let $\xi$ be as above.
	There exists a Shintani decomposition $\Phi$ such that any $\sigma\in\Phi$
	is of the form
	$\sigma_\bsalpha=\sigma$
	for some $\bsalpha\in A_\xi^{q+1}$.
\end{lemma}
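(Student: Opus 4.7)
The plan is to construct the decomposition $\Phi$ by modifying a standard Shintani decomposition so that no one-dimensional cone has a primitive generator in $L_\xi = \ker(\xi)$. The starting point is that, since $\xi$ is a nontrivial character of finite order, the kernel $L_\xi$ is a proper sublattice of $\cO_F$, so $A_\xi = A \setminus L_\xi$ is the ``generic'' part of $A$. Since any face of a simplicial cone $\sigma_\bsalpha$ is generated by a sub-tuple of $\bsalpha$, it suffices to ensure that every one-dimensional cone of $\Phi$ has its primitive generator in $A_\xi$: this automatically implies that every $\sigma\in\Phi$ of dimension $q+1$ is of the form $\sigma_\bsalpha$ for some $\bsalpha\in A_\xi^{q+1}$.

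First, I would take any Shintani decomposition $\Phi_0$, whose existence is guaranteed by the references already cited in \S\ref{section: Lerch Zeta} (\cite{Shi76}*{Theorem 1}, \cite{Hid93}*{\S 2.7 Theorem 1}, \cite{Yam10}*{Theorem 4.1}, \cite{Ish92}). Since $\Delta\backslash\Phi_0$ is finite, only finitely many $\Delta$-orbits of one-dimensional cones occur; among these I identify the ``bad'' orbits as those containing a cone $\bbR_+\alpha$ whose primitive generator satisfies $\alpha\in L_\xi$.

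The key step is to modify $\Phi_0$ in a $\Delta$-equivariant way to eliminate the bad one-dimensional cones. The basic move is: for each bad one-dimensional cone $\bbR_+\alpha$, merge the maximal cones of $\Phi_0$ sharing this ray, and then if the merged region fails to be simplicial, refine it via a $\Delta$-equivariant star subdivision using new rays chosen from $A_\xi$. Because $A_\xi$ is ``dense'' in the interior of $\bbR_+^I$ (as the complement of a proper sublattice), suitable replacement generators can always be found in any given neighborhood. Processing one $\Delta$-orbit of bad rays at a time preserves $\Delta$-equivariance, and the finiteness of $\Delta\backslash\Phi_0$ guarantees termination after finitely many steps.

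Finally, I would verify that the resulting fan $\Phi$ is a Shintani decomposition in the sense of Definition \ref{def: Shintani}: it covers $\bbR_+^I\cup\{0\}$ by construction (since merging and star subdivision preserve the support), it is $\Delta$-invariant by the orbit-wise choice of refinement, and it has finite $\Delta$-quotient because each of the finitely many original orbits is replaced by a finite collection of new orbits. The main obstacle is the simultaneous control of $\Delta$-equivariance and simpliciality throughout the merge-and-refine process: one must choose replacement rays in $A_\xi$ that are compatible with the $\Delta$-action and that do not create new bad rays in adjacent orbits. The ``density'' of $A_\xi$ and the flexibility of star subdivision provide the needed room to carry this out.
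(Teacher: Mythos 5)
Your proposal takes a genuinely different route from the paper, but it has gaps that the paper's argument avoids by being far more elementary.

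The paper does not merge or re-triangulate anything. It simply \emph{perturbs} each bad ray in place: if $\alpha\in A$ generates a ray of the fan with $\xi(\alpha)=1$, pick any $\beta\in\cO_{F+}$ with $\xi(\beta)\neq 1$ (such $\beta$ exists because $\xi$ is nontrivial), and replace the ray $\sigma_\alpha$ by $\sigma_{N\alpha+\beta}$. Then $\xi(N\alpha+\beta)=\xi(\beta)\neq 1$ for every $N$, so the new ray lies in $A_\xi$; and as $N\to\infty$ the direction of $N\alpha+\beta$ converges to that of $\alpha$, so for $N$ large the perturbation is small enough that the deformed collection of cones (the bad ray and every cone of $\Phi$ having it as a face get moved; everything else stays fixed) is still a simplicial fan with the same combinatorics. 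One processes the finitely many orbits of bad rays one at a time, moving the whole orbit $\{\varepsilon\sigma_\alpha\}$ simultaneously to preserve equivariance. Nothing is removed, so there is no re-triangulation to control.

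The concrete gaps in your merge-and-refine plan are: (i) ``merge the maximal cones sharing a ray'' produces the closed star of that ray, which is a polyhedral region, not a cone, and re-triangulating it compatibly with the induced triangulation of its boundary (so as to glue with the unmodified adjacent cones) is a nontrivial constraint that you assert rather than address; (ii) what you call a ``star subdivision'' is the operation of \emph{inserting} a ray, whereas you need to \emph{delete} one and re-cone its star, which star subdivisions alone do not accomplish; and (iii) the claim that replacement rays can always be chosen in $A_\xi$ while simultaneously preserving $\Delta$-equivariance, avoiding new bad rays, and keeping the result a fan is exactly the hard part, and ``density of $A_\xi$ plus flexibility of star subdivision'' does not substitute for a proof. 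The perturbation argument sidesteps all of this because the combinatorics of the fan never change.

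One smaller point worth noting: you do not need to choose replacement rays ``in any given neighborhood'' by a density argument; the explicit family $N\alpha+\beta$ already does the job, and the verification $\xi(N\alpha+\beta)\neq 1$ is immediate from $\xi(\alpha)=1$.
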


\begin{proof}
	Let $\Phi$ be a Shintani decomposition.  We will deform $\Phi$ to construct a Shintani decomposition
	satisfying our assertion.  
	Let $\Lambda$ be a finite subset of $A$ such that $\{\sigma_\alpha \mid \alpha\in \Lambda\}$
	 represents the quotient set $\Delta_\xi\backslash\Phi_1$.	
	 If $\xi(\alpha)\neq 1$ for any $\alpha\in\Lambda$, then $\Phi$ satisfies our assertion since 
	 $\alpha\in A_\xi$ if and only if $\xi(\alpha)\neq 1$.
	 
	 Suppose that there exists $\alpha\in \Lambda$ such that $\xi(\alpha)=1$.
	 Since $\xi$ is a nontrivial character on $\cO_F$, there exists $\beta\in\cO_{F+}$ such that $\xi(\beta)\neq 1$.
	 Then for any integer $N$, we have $\xi(N\alpha+\beta)\neq 1$. 
	 Let $\Phi'$ be the set of cones obtained by deforming $\sigma=\sigma_\alpha$ to $\sigma'\coloneqq\sigma_{N\alpha+\beta}$
	 and $\varepsilon\sigma$ to $\varepsilon\sigma'$ for any $\varepsilon\in\Delta_\xi$.
	By taking $N$ sufficiently large, the amount of deformation can be made arbitrarily small so that
	$\Phi'$ remains a fan.
	By repeating this process, we obtain a Shintani decomposition satisfying the desired condition.
\end{proof}

In what follows, we fix a Shintani decomposition $\Phi$ satisfying the condition of Lemma \ref{lem: inclusion}.
Let $N\colon\bbR_+^I\rightarrow\bbR_+$ be the norm map defined by 
$N((a_\tau))\coloneqq\prod_{\tau\in I}a_\tau$,
and we let 
\[
	\bbR_1^I\coloneqq\{(a_\tau)\in\bbR_+^I\mid N((a_\tau))=1\}
\]
be the subset of $\bbR^I_+$ of norm one.
For any $\sigma\in\Phi_{q+1}$, the intersection $\sigma\cap\bbR_1^I$
is a subset of $\bbR_1^I$ which is homeomorphic to a simplex of dimension $q$, 
and the set $\{  \sigma\cap \bbR_1^I  \mid \sigma\in\Phi_+\}$ 
for $\Phi_+\coloneqq\bigcup_{q\geq0}\Phi_{q+1}$
gives a simplicial decomposition of the topological 
space $\bbR_1^I$.

In what follows,  for any $\sigma\in\Phi_{q+1}$, we denote by $\langle\sigma\rangle$ the class $\sgn(\bsalpha)\langle\bsalpha\rangle$ in 
$C_q(A_\xi)$, where $\bsalpha\in A_\xi^{q+1}$ is a generator of $\sigma$.  Recall that such a 
generator $\bsalpha$ is uniquely determined up to permutation from $\sigma$.
We then have the following.

\begin{lemma}\label{lemma: 5.4}
	For any integer $q\geq0$, we let $C_q(\Phi)$ be the $\bbZ[\Delta_\xi]$-submodule of $C_q(A_\xi)$
	generated by $\langle\sigma\rangle$ for all $\sigma\in\Phi_{q+1}$.
	Then $C_\bullet(\Phi)$ is a subcomplex of $C_\bullet(A_\xi)$
	which also gives a free resolution of $\bbZ$ as a $\bbZ[\Delta_\xi]$-module.
	In particular, the natural inclusion induces a quasi-isomorphism of complexes
	\[
		C_\bullet(\Phi)\xrightarrow{\qis} C_\bullet(A_\xi)
	\]
	compatible with the action of $\Delta_\xi$.
\end{lemma}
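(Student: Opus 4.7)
The plan is to establish three facts in order: (i) $C_\bullet(\Phi)$ is closed under the differential of $C_\bullet(A_\xi)$, hence is a subcomplex; (ii) each $C_q(\Phi)$ is free as a $\bbZ[\Delta_\xi]$-module; and (iii) the complex $C_\bullet(\Phi)$ augmented by $d_0\langle\sigma\rangle = 1$ is acyclic. Facts (ii) and (iii) together give that $C_\bullet(\Phi) \to \bbZ$ is a free resolution over $\bbZ[\Delta_\xi]$, after which the quasi-isomorphism will follow from the comparison theorem for projective resolutions. For (i), the key input is that $\Phi$ is a simplicial fan, so every codimension-one face of a cone $\sigma_\bsalpha \in \Phi_{q+1}$ is again a cone in $\Phi_q$, generated by the appropriate subtuple of $\bsalpha$. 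Consequently, $d_q\langle\sigma\rangle$ is, up to the signs dictated by the convention $\langle\sigma\rangle = \sgn(\bsalpha)\langle\bsalpha\rangle$, an alternating sum of classes of such faces, and therefore lies in $C_{q-1}(\Phi)$.

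For (ii), the essential observation is that $\Delta$ (and a fortiori $\Delta_\xi$) acts freely on $\Phi_{q+1}$. Identifying $\bbR_1^I$ with the hyperplane $\{y \in \bbR^I : \sum_\tau y_\tau = 0\} \cong \bbR^{g-1}$ via $(x_\tau) \mapsto (\log x_\tau)$, the action of $\Delta$ on $\bbR_1^I$ becomes translation by the vectors $(\log \varepsilon^\tau)$, which is free by Dirichlet's unit theorem. Since each $\sigma \cap \bbR_1^I$ is a bounded simplex, no nontrivial element of $\Delta$ can preserve it. Choosing representatives of $\Delta_\xi \backslash \Phi_{q+1}$ then yields an explicit $\bbZ[\Delta_\xi]$-basis of $C_q(\Phi)$, in a manner parallel to the construction of $B_0$ in the proof of Proposition \ref{proposition: acyclic}. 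For (iii), I would interpret $C_\bullet(\Phi)$ together with its augmentation as the augmented simplicial chain complex of the triangulation $\{\sigma \cap \bbR_1^I\}_{\sigma \in \Phi_+}$ of $\bbR_1^I$, where a cone in $\Phi_{q+1}$ contributes the $q$-simplex $\sigma \cap \bbR_1^I$. Since $\bbR_1^I \cong \bbR^{g-1}$ is contractible, its simplicial homology vanishes in positive degrees and equals $\bbZ$ in degree zero, yielding the required acyclicity.

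Once (i)--(iii) are established, both $C_\bullet(A_\xi) \to \bbZ$ and $C_\bullet(\Phi) \to \bbZ$ are free $\bbZ[\Delta_\xi]$-resolutions of $\bbZ$, and the inclusion is a $\Delta_\xi$-equivariant chain map respecting the augmentations. The comparison theorem for projective resolutions then supplies the desired quasi-isomorphism. The main technical obstacle I expect lies in (iii): one must confirm that the sign convention $\langle\sigma\rangle = \sgn(\bsalpha)\langle\bsalpha\rangle$, based on $\det(\alpha_j^{\tau_i})$ for top-dimensional cones and extended appropriately to lower dimensions, matches the orientation required by the simplicial boundary operator on $\bbR_1^I$. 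This should reduce to a determinantal identity relating the sign of a cone to those of its codimension-one faces.
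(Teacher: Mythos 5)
Your proposal reproduces the paper's own proof: the paper likewise identifies $C_\bullet(\Phi)$ with the simplicial chain complex of the triangulation $\{\sigma\cap\bbR^I_1 : \sigma\in\Phi_+\}$ of the contractible space $\bbR^I_1\cong\bbR^{g-1}$, records freeness of $C_q(\Phi)$ over $\bbZ[\Delta_\xi]$, and then compares the two free resolutions of $\bbZ$. Incidentally, the sign concern you raise in step (iii) is not a genuine obstacle: as a $\bbZ[\Delta_\xi]$-module, $C_q(\Phi)$ is generated by $\langle\bsalpha\rangle$ for generators $\bsalpha$ of the cones in $\Phi_{q+1}$ independently of the normalization $\sgn(\bsalpha)$ (which in the paper is defined only for $q+1=g$ and serves solely to pin down the fundamental class later), and the boundary operator restricted from $C_\bullet(A_\xi)$ is by definition the alternating sum over codimension-one faces, i.e.\ already the simplicial boundary of the triangulation, so no determinantal orientation identity needs to be verified.
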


\begin{proof}
	Note that $C_q(\Phi)$ for any integer $q\geq0$
	is a free $\bbZ[\Delta_\xi]$-module
	generated by representatives of the quotient $\Delta_\xi\backslash \Phi_{q+1}$.
	By construction, $C_\bullet(\Phi)$ can be identified with the chain complex associated to 
	the simplicial decomposition $\{\sigma\cap\bbR^I_1\mid \sigma\in\Phi_+\}$
	of the topological space $\bbR^I_1$, hence we see that the complex $C_\bullet(\Phi)$ is exact and gives a free resolution
	of $\bbZ$ as a $\bbZ[\Delta_\xi]$-module.  Our assertion follows from the fact that $C_\bullet(A_\xi)$ also gives a free
	resolution of $\bbZ$ as a $\bbZ[\Delta_\xi]$-module.
\end{proof}

We again fix a numbering of elements in $I$ so that $I=\{\tau_1,\ldots,\tau_g\}$.
We let
\[
	L\colon\bbR_+^I\rightarrow\bbR^g
\]
be the homeomorphism defined by $(x_\tau)\mapsto(\log x_{\tau_i})$.   If we let $W\coloneqq\{ (y_{\tau_i})\in\bbR^g
\mid\sum_{i=1}^gy_{\tau_i}=0\}$, then $W$ is an $\bbR$-linear subspace of $\bbR^g$ of dimension $g-1$, and $L$ gives a 
homeomorphism $\bbR_1^I\cong W\cong \bbR^{g-1}$.
For $\Delta_\xi\subset F$,  the Dirichlet unit theorem (see for example \cite{Sam70}*{Theorem 1 p.61})
shows that the discrete subset $L(\Delta_\xi)\subset W$ is a free $\bbZ$-module of rank $g-1$,
hence we have
\[
	\cT_\xi\coloneqq\Delta_\xi\backslash\bbR_1^I\cong \bbR^{g-1}/\bbZ^{g-1}.
\]

We consider the coinvariant
\[
	C_q(\Delta_\xi\backslash\Phi)\coloneqq  C_q(\Phi)_{\Delta_\xi}
\]
of $C_q(\Phi)$ with respect to the action of $\Delta_\xi$,
that is, the quotient of  $C_q(\Phi)$ by the subgroup generated by $\langle\sigma\rangle-\langle\varepsilon\sigma\rangle$
for $\sigma\in\Phi_{q+1}$ and $\varepsilon\in\Delta_\xi$.
For any $\sigma\in\Phi_{q+1}$, we denote by $\ol\sigma$ the image of $\sigma$ in the quotient 
$\Delta_\xi\backslash\Phi_{q+1}$, and we denote by $\langle\ol\sigma\rangle$
the image of $\langle\sigma\rangle$
in $C_q(\Delta_\xi\backslash\Phi)$,
which depends only on the class $\ol\sigma\in
\Delta_\xi\backslash\Phi_{q+1}$.
Then the set $\{ \Delta_\xi  \backslash(\Delta_\xi\sigma\cap\bbR^I_1)
\mid \ol\sigma\in \Delta_\xi\backslash\Phi_+\}$ of subsets of $\cT_\xi$
gives a simplicial decomposition of $\cT_\xi$ and
$C_\bullet(\Delta_\xi\backslash \Phi)$
may be identified with the associated chain complex.
Hence we have
\begin{align*}
	H_m(C_\bullet(\Delta_\xi\backslash\Phi))&= H_m(\cT_\xi,\bbZ),&
	H^m\Bigl(\Hom_\bbZ\bigl(C_\bullet(\Delta_{\xi}\backslash\Phi), \bbZ\bigr)\Bigr)&= H^m(\cT_\xi,\bbZ).
\end{align*}
Since $\cT_\xi\cong\bbR^{g-1}/\bbZ^{g-1}$,
the homology groups $H_m(\cT_\xi,\bbZ)$ for integers $m$ are free abelian groups,
and the pairing
 \begin{equation}\label{eq: pairing}
 	H_m(\cT_\xi,\bbZ) \times H^m(\cT_\xi,\bbZ) \rightarrow \bbZ,
 \end{equation}
obtained by associating to a cycle
 $u\in C_m(\Delta_\xi\backslash\Phi)$ and a cocycle $\varphi \in \Hom_\bbZ\bigl(C_m(\Delta_\xi\backslash\Phi), \bbZ\bigr)$
 the element $\varphi(u)\in\bbZ$, is perfect 
  (see for example \cite{Mun84}*{Theorem 45.8}).

The generator of the cohomology group
\[
	 H_{g-1}(\cT_\xi,\bbZ) = H_{g-1}\bigl(C_\bullet(\Delta_\xi\backslash\Phi)\bigr) \cong \bbZ
\]
is given by the fundamental class
\begin{equation}\label{eq: fundamental class}
	\sum_{\ol\sigma\in\Delta_\xi\backslash\Phi_{g}} \langle\ol\sigma\rangle
	\in C_{g-1}(\Delta_\xi\backslash\Phi),
\end{equation}
and the canonical isomorphism
\begin{equation}\label{eq: isom}
	H^{g-1}(\cT_\xi,\bbQ(\xi)) = H^{g-1}\Bigl(\Hom_\bbZ\bigl(C_\bullet(\Delta_\xi\backslash\Phi), \bbQ(\xi)\bigr)\Bigr) \cong \bbQ(\xi)
\end{equation}
induced by the fundamental class \eqref{eq: fundamental class} via the pairing \eqref{eq: pairing} 
is given explicitly in terms of cocycles
by mapping any $\varphi\in \Hom_\bbZ(C_{g-1}(\Delta_\xi\backslash\Phi), \bbQ(\xi))$ to the element 
$\sum_{\ol\sigma\in\Delta_\xi\backslash\Phi_g}\varphi(\langle\ol\sigma\rangle)\in \bbQ(\xi)$.

\begin{proposition}\label{prop: ecc}
	Let $\eta\in H^{g-1}(\xi/\Delta_\xi,\sO_\xi)$ be represented by a cocycle 
	\[
		 (\eta_\bsalpha)\in C^{g-1}(\frV/\Delta_\xi,\sO_\xi)
		=\biggl(\prod_{\bsalpha\in A_\xi^{g}}^\alt
	\bbQ(\xi)\biggr)^{\Delta_\xi}.
	\]	
	For any cone $\sigma\in\Phi_g$, let 
	$\eta_\sigma\coloneqq \sgn(\bsalpha)\eta_{\bsalpha}$ for any $\bsalpha\in A^g_\xi$ such that $\sigma_\bsalpha=\sigma$.
	Then the homomorphism mapping the cocycle $(\eta_\bsalpha)$ to
	$
		\sum_{\ol\sigma\in\Delta_\xi\backslash\Phi_{g}} \eta_{\sigma}
	$
	induces a canonical isomorphism
	\begin{equation}\label{eq: isom main}
		H^{g-1}(\xi/\Delta_\xi,\sO_\xi)\cong\bbQ(\xi).
	\end{equation}
\end{proposition}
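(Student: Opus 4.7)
My plan is to unwind the claimed isomorphism through a chain of quasi-isomorphisms, reducing it to the classical identification of the top cohomology of the $(g-1)$-torus $\cT_\xi$ with its coefficient ring via the fundamental class, which is already set up in the paragraphs preceding the proposition.

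First I would combine \eqref{eq: similarly} with Lemma \ref{lemma: 5.2} to identify $H^{g-1}(\xi/\Delta_\xi,\sO_\xi)$ with $H^{g-1}(\Hom_{\Delta_\xi}(C_\bullet(A_\xi),\bbQ(\xi)))$. Both $C_\bullet(\Phi)$ and $C_\bullet(A_\xi)$ are free $\bbZ[\Delta_\xi]$-resolutions of the trivial module $\bbZ$, so the inclusion from Lemma \ref{lemma: 5.4} is a $\bbZ[\Delta_\xi]$-linear homotopy equivalence, and applying $\Hom_{\Delta_\xi}(-,\bbQ(\xi))$ induces a quasi-isomorphism
\[
\Hom_{\Delta_\xi}(C_\bullet(A_\xi),\bbQ(\xi))\xrightarrow{\qis}\Hom_{\Delta_\xi}(C_\bullet(\Phi),\bbQ(\xi)).
\]

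Since $\Delta_\xi$ acts trivially on $\bbQ(\xi)$, the standard coinvariants-Hom adjunction yields, for every $q$,
\[
\Hom_{\Delta_\xi}(C_q(\Phi),\bbQ(\xi))=\Hom_\bbZ(C_q(\Phi)_{\Delta_\xi},\bbQ(\xi))=\Hom_\bbZ(C_q(\Delta_\xi\backslash\Phi),\bbQ(\xi))
\]
compatibly with the differentials. This identifies the target with $H^{g-1}(\cT_\xi,\bbQ(\xi))$, and the canonical isomorphism \eqref{eq: isom} together with the pairing \eqref{eq: pairing} against the fundamental class \eqref{eq: fundamental class} provides the explicit isomorphism with $\bbQ(\xi)$ sending a cocycle $\varphi$ to $\sum_{\ol\sigma\in\Delta_\xi\backslash\Phi_g}\varphi(\langle\ol\sigma\rangle)$.

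Finally, I would trace a cocycle $(\eta_\bsalpha)\in C^{g-1}(\frV/\Delta_\xi,\sO_\xi)$ through this composite. Under Lemma \ref{lemma: 5.2} it corresponds to the $\bbZ[\Delta_\xi]$-linear map $\langle\bsalpha\rangle\mapsto\eta_\bsalpha$; restriction to $C_\bullet(\Phi)$ and evaluation on $\langle\sigma\rangle=\sgn(\bsalpha)\langle\bsalpha\rangle$ produces $\sgn(\bsalpha)\eta_\bsalpha=\eta_\sigma$, which descends to the coinvariants by the $\Delta_\xi$-invariance of $(\eta_\bsalpha)$, and the fundamental-class formula then recovers the claimed image $\sum_{\ol\sigma\in\Delta_\xi\backslash\Phi_g}\eta_\sigma$. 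The main obstacle I anticipate is keeping the orientation and sign conventions consistent throughout—specifically, verifying that the $\sgn(\bsalpha)$ built into the definition of $\langle\sigma\rangle$ matches the orientation implicit in writing the fundamental class of $\cT_\xi$ as the unsigned sum $\sum_{\ol\sigma}\langle\ol\sigma\rangle$—but this is careful bookkeeping rather than a conceptual difficulty.
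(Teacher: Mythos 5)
Your proposal is correct and follows essentially the same route as the paper: identify the equivariant \v Cech cohomology with $\Hom_{\Delta_\xi}(C_\bullet(A_\xi),\bbQ(\xi))$ via Lemma \ref{lemma: 5.2} and \eqref{eq: similarly}, pass to $C_\bullet(\Phi)$ via Lemma \ref{lemma: 5.4} (using that both are free $\bbZ[\Delta_\xi]$-resolutions of $\bbZ$), identify $\Hom_{\Delta_\xi}(C_\bullet(\Phi),\bbQ(\xi))$ with $\Hom_\bbZ(C_\bullet(\Delta_\xi\backslash\Phi),\bbQ(\xi))$, and conclude by \eqref{eq: isom}. The only addition is your explicit trace of a cocycle through the composite, which the paper leaves implicit.
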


\begin{proof}
	Since $C_q(\Phi)$ and $C_q(A_\xi)$ are free $\bbZ[\Delta_\xi]$-modules, 
	the quasi-isomorphism $C_\bullet(\Phi)\xrightarrow\qis C_\bullet(A_\xi)$ of Lemma \ref{lemma: 5.4} induces the 
	quasi-isomorphism
	\[
	\Hom_{\Delta_\xi}\bigl(C_\bullet(A_\xi),\bbQ(\xi)\bigr)\xrightarrow\qis \Hom_{\Delta_\xi}\bigl(C_\bullet(\Phi),\bbQ(\xi)\bigr).
	\]	
	Combining this fact with Lemma \ref{lemma: 5.2} and \eqref{eq: similarly}, we see that
	\[	
		H^{g-1}(\xi/\Delta_\xi,\sO_\xi)\cong H^{g-1}\bigl(\Hom_{\Delta_\xi}\bigl(C_\bullet(\Phi),\bbQ(\xi)\bigr)\bigr).
	\]
	 Since we have
	$
		\Hom_{\Delta_\xi}\bigl(C_\bullet(\Phi),\bbQ(\xi)\bigr)
		= \Hom_{\bbZ}\bigl(C_\bullet(\Delta_\xi\backslash\Phi),\bbQ(\xi)\bigr),
	$
	our assertion follows from \eqref{eq: isom}.
\end{proof}

We will now prove Theorem \ref{theorem: main}.

\begin{proof}[Proof of Theorem \ref{theorem: main}]
	By construction 
	and Lemma \ref{lem: differential}, the class $\partial^k\cG(\xi)$ is a class defined over $\bbQ(\xi)$ represented by the cocycle
	$
		(\partial^k\cG_\bsalpha(\xi)) \in C^{g-1}(\frV/\Delta_\xi,\sO_{\xi}).
	$
	By Proposition \ref{prop: ecc} and Proposition \ref{prop: Shintani}, the class $\partial^k\cG(\xi)$ maps 
	through\eqref{eq: isom main} to 
	\[
		\sum_{\sigma\in\Delta_\xi\backslash\Phi_g} \partial^k\cG_\sigma(\xi)
		=\sum_{\sigma\in\Delta_\xi\backslash\Phi_g} \zeta_{\sigma}(\xi,(-k,\ldots,-k)).
	\]
	Our assertion now follows from \eqref{eq: Shintani}.
\end{proof}

\begin{corollary}
Assume that the narrow class number of $F$ is \textit{one},
	and let $\chi\colon\Cl^+_F(\frf)\rightarrow\bbC^\times$ be a finite primitive Hecke character of $F$ of 
	conductor $\frf\neq(1)$.
	If we let $U[\frf]\coloneqq\bbT[\frf]\setminus\{1\}$, then we have
	\[
		L(\chi, -k)=\sum_{\xi\in U[\frf]/\Delta}c_\chi(\xi)\partial^k\cG(\xi)
	\]
	for any integer $k\geq0$.
\end{corollary}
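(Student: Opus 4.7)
The plan is to deduce this corollary almost formally by combining the two main ingredients already established: Proposition \ref{prop: Hecke}, which writes a Hecke $L$-function as a finite linear combination of Lerch zeta functions, and Theorem \ref{theorem: main}, which identifies the values of each Lerch zeta function at nonpositive integers with the specializations of derivatives of the Shintani generating class.

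First, I would apply Proposition \ref{prop: Hecke} to obtain the identity
\[
L(\chi,s) \;=\; \sum_{\xi\in U[\frf]/\Delta} c_\chi(\xi)\,\cL(\xi\Delta,s),
\]
initially valid for $\Re(s)>1$. Since $\chi$ is a nontrivial primitive Hecke character (because $\frf\neq(1)$), the left-hand side $L(\chi,s)$ extends to an entire function. For each $\xi\in U[\frf]$, the point $\xi$ is a nontrivial torsion point of $\bbT$, so $\cL(\xi\Delta,s)$ is entire as well. Hence both sides of the above identity are entire functions on $\bbC$ that agree on a non-empty open set, and the identity therefore holds on all of $\bbC$ by analytic continuation. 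In particular, specializing to $s=-k$ for any integer $k\geq 0$ gives
\[
L(\chi,-k)=\sum_{\xi\in U[\frf]/\Delta} c_\chi(\xi)\,\cL(\xi\Delta,-k).
\]

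Second, for each nontrivial torsion point $\xi\in U[\frf]$, I would apply Theorem \ref{theorem: main} to rewrite each summand as $\cL(\xi\Delta,-k)=\partial^k\cG(\xi)$, where the equality takes place in $\bbQ(\xi)\cong H^{g-1}(\xi/\Delta_\xi,\sO_\xi)$ via the isomorphism of Proposition \ref{prop: ecc}. Substituting into the displayed formula yields exactly the asserted identity
\[
L(\chi,-k)=\sum_{\xi\in U[\frf]/\Delta} c_\chi(\xi)\,\partial^k\cG(\xi).
\]

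There is no genuine obstacle here; this really is a two-line corollary once Proposition \ref{prop: Hecke} and Theorem \ref{theorem: main} are in hand. The only subtlety worth noting in the write-up is the justification for passing the identity of Proposition \ref{prop: Hecke} from the region of absolute convergence $\Re(s)>1$ to the point $s=-k$, which is immediate from the entirety of both sides under the hypothesis $\frf\neq(1)$. One could also remark that the sum over $U[\frf]/\Delta$ (rather than $\bbT[\frf]/\Delta$) is automatic because $c_\chi(1)=0$ under the primitivity hypothesis, as already used in the proof of Proposition \ref{prop: Hecke}.
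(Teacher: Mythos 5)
Your proof is correct and follows exactly the paper's route: combine Proposition \ref{prop: Hecke} with Theorem \ref{theorem: main}. The paper's own proof is the one-liner ``The result follows from Theorem \ref{theorem: main} and Proposition \ref{prop: Hecke}''; your additional remarks on analytic continuation and on why the sum ranges over $U[\frf]/\Delta$ rather than $\bbT[\frf]/\Delta$ are accurate but not new ideas, as both points are already implicit in (and, in the second case, explicit in the proof of) Proposition \ref{prop: Hecke}.
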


\begin{proof}
	The result follows from Theorem \ref{theorem: main} and Proposition \ref{prop: Hecke}.
\end{proof}

The significance of this result is that
the special values at negative integers of \textit{any} finite Hecke character of $F$ may be expressed as a linear combination of special values of the derivatives of a single canonical cohomology class, the Shintani class $\cG$
in $H^{g-1}(U/\Delta,\sO_\bbT)$.

\subsection*{Acknowledgement} 
The authors would like to thank the KiPAS program FY2014--2018 of the Faculty of Science and Technology at 
Keio University, especially Professors Yuji Nagasaka and Masato Kurihara, for providing an excellent environment making this research possible.  
The authors thank Shinichi Kobayashi for discussion.
The authors would also like to thank Yoshinosuke Hirakawa, Yoshinori Kanamura, Hideki Matsumura and Takuki Tomita
for extensive discussion concerning Shintani's work.

\begin{bibdiv}
	\begin{biblist}
		\bibselect{PolylogarithmBibliography}
	\end{biblist}
\end{bibdiv}

\end{document}